\DeclareSymbolFontAlphabet{\Bbb}{AMSb}
\newlength{\fixboxwidth}
\newcommand{\COMMENT}[1]{}
\DeclareMathOperator{\ext}{ext}
\newcommand{\eins}{\mathbbm{1}}
\newcommand{\R}{\mathbb{R}}
\newcommand{\quark}{\setbox0\hbox{$x$}\hbox to\wd0{\hss$\cdot$\hss}}
\newcommand{\snorm}[1] {\Vert #1 \Vert}
\newtheorem{thm}{Theorem}[section]
\newtheorem{lem}[thm]{Lemma}
\newtheorem{cor}[thm]{Corollary}
\theoremstyle{definition}
\newtheorem{rmk}[thm]{Remark}
\def \d         { \delta }
\title{Extreme points of a ball about a measure with finite support
}
\author{Houman Owhadi and Clint Scovel
\\
California Institute of Technology
}
\date{\today}
\renewcommand{\thefigure}{\arabic{section}.\arabic{figure}}
\renewcommand{\p@subfigure}{\thefigure}
\newcounter{mycount}
\begin{document}
\maketitle
          \begin{abstract}
We show that, for the space of Borel probability measures on a Borel subset of a Polish metric space, the extreme points of the Prokhorov, Monge-Wasserstein and Kantorovich metric balls about a measure whose support has at most $n$ points, consist of measures whose supports have at most $n+2$ points.
 Moreover, we use  the
 Strassen and Kantorovich-Rubinstein duality theorems  to  develop
 representations of
supersets of the extreme points based on linear programming, and then develop these representations towards
the  goal of their efficient computation. 
          \end{abstract}
%\begin{keywords} Extreme Points, Prokhorov, Kantorovich, Monge-Wasserstein,  Strassen, Kantorovich-Rubinstein, Optimization, Ambiguity
%\end{keywords}

% \begin{AMS} 60D05, 52A05
%\end{AMS}
          \section{Introduction}\label{intro}
In a recent work by  Wozabal \cite{Wozabal}, a framework for optimization under ambiguity is developed
 -including
a discussion of the history of the subject and the current literature. See also 
 Dupa{\v{c}}ov{\'a} \cite{Dupavcova2011} and the recent work by
Esfahani and Kuhn \cite{esfahani2015data}, which expands Wozabal's approach to develop
convex reductions for an important class of objective functions.
We quote from the abstract:
``Though the true distribution is unknown, existence
of a reference measure $P$ enables the construction of non-parametric ambiguity sets as
Kantorovich balls around $P$. The original stochastic optimization problems are robustified
by a worst case approach with respect to these ambiguity sets.''
Fundamental to the development of this framework, Wozabal
  \cite[Cor.~1]{Wozabal}  asserts that, when the domain is a compact metric space,
   the
  extreme points of a Kantorovich ball about a measure whose support has at most $n$ points consist
of measures whose supports have at most $n+3$ points.
The purpose of this paper is to extend and sharpen this result;
extending the domain from
 a compact metric space to  a Borel subset of a Polish metric space,
and improving the bound on the number of Dirac masses from $n+3$ to $n+2$. In addition, we provide  similar results for the Prokhorov metric  and for the Monge-Wasserstein distances.
This increase in generality from a compact metric space to a Borel subset of a Polish space has two nontrivial components. The first is that it replaces compactness with separability. That is, since a compact metric space is complete, it amounts to a generalization from compact complete metric spaces to separable complete metric spaces. The second is that it replaces completeness with measurability. That is, 
it eliminates the completeness requirement and substitutes it with the requirement that it be a Borel subset of separable complete metric space. For example, these results now apply to the case of probability measures on the (noncompact) open interval (0,1).

To outline how they are obtained, recall
Rogosinski's Lemma \cite{Rogosinski}, that   on an arbitrary measurable space, the $n$
moments corresponding to the expected values of $n$ integrable functions with respect to a
 probability measure can be achieved by
a convex sum of $n+1$ Dirac masses. Moreover, recall that an exposed point of a convex set in a locally convex space
 is a point
which is the unique maximizer of some continuous affine function, and
Straszewicz \cite{Straszewicz} Theorem, that the exposed points
of a finite dimensional  compact convex set  is dense in its extreme points.
Wozabal uses the  Kantorovich-Rubinstein Theorem combined with Rogosinski's Lemma \cite{Rogosinski} to characterize
the exposed points of the Kantorovich ball about a measure whose support has at most $n$ points to be
a measure with support at most $n+3$ points. The fact that one obtains $n+3$ Dirac masses comes from
the fact that Kantorovich-Rubinstein theorem introduces one function, the notion  of an exposed point another,
and the central measure having support of size $n$ introduces $n$ more functions, leading to a total of
$n+2$ continuous functions on the set of probability measures on $X\times X$, so that
  Rogosinski's Lemma   implies that
the exposed points
are convex sums of $(n+2) +1 =n+3$ Dirac masses. Then,
 Choquet's \cite[Sec.~17, pg.~99]{Choquet1969lectures}
extension of Straszewicz' Theorem \cite{Straszewicz} to compact metrizable subsets of locally convex space along with
the fact that the set of probability measures equipped with the weak topology is compact and metrizable when the domain is,
is used to show that these  exposed points are dense in the extreme points. A limiting argument showing
that the weak limit of a convex sum of $n+3$ Dirac masses is a convex sum of $n+3$ Dirac masses
establishes the assertion.

In our approach, we  use Dudley's \cite[Thm.~11.8.2]{Dudley:2002}  version of
the Kantorovich-Rubinstein Theorem
 for tight measures on separable metric spaces, and
characterize the extreme points of the space of measures corresponding to the Kantorovich-Rubinstein duality
 using
 results of Winkler \cite{Winkler:1988,winkler1978integral}, previously applied
in \cite{OSSMO:2011} to the  reduction of optimization problems on {\em non-compact} spaces of {\em tight} probability measures arising in Uncertainty Quantification.
Since, by Suslin's Theorem,  a Borel subset of a Polish space is Suslin and since all probability measures
on Suslin spaces are tight, these results allow the extension of many results regarding the extreme points
of sets of probability measures from compact metric domains and continuous moment functions
 to Borel subsets of Polish metric spaces and measurable moment functions.
Then a fundamental result
 that is implicit in the results of Winkler \cite{Winkler:1988,winkler1978integral} is proven in Theorem \ref{thm_winkler};  that
a weakly closed convex set of probability measures  on a Borel subset of a Polish metric space has an extreme point.
This result combined with Lemma \ref{lem_affineedtreme2},  giving sufficient conditions
that the affine image
of the extreme points of a set cover  the extreme points of the affine image of that set,
 shows that the image of these extreme points in the dual cover the extreme points
of the Kantorovich ball. This latter approach has the advantage that it does not pass through the intermediate
stage of exposed points, so does not add an additional function, and does
not require a generalization of Straszewicz' Theorem \cite{Straszewicz} to non-compact sets, although it does suggest
that such a generalization may exist for weakly closed convex sets of tight measures.

To establish our main result, Theorem \ref{thm_wozabal}, we develop a more general and expressive result
in Theorem \ref{thm_wasser}, which not only produces a similar result for the 
Monge-Wasserstein
metric, its   
  Corollary \ref{cor_wasser} shows  how the  duality results of Kantorovich-Rubinstein and Strassen
combined with the results of Winkler \cite{Winkler:1988} on the extreme points of moment constraints,  facilitate
a Monge-Wasserstein linear programming
representation of supersets of the extreme points which can be used for convex maximization over the
Kantorovich  or Prokhorov ball about a measure whose support has at most $n$ points.
A stronger  application of  Winkler \cite[Thm.~2.1]{Winkler:1988} is then used to more fully develop
 these representations  in Section \ref{sec_computation} towards the goal of their efficient computation.
Finally, in Section  \ref{sec6} we consider  when the central measure is an empirical measure.

\section{Main Results}
For a metric space $(X,d)$, the Prokhorov metric $d_{Pr}$ on the space
$\mathcal{M}(X)$ of Borel probability measures is defined
by
\begin{equation}
\label{def_Pr}
  d_{Pr}(\mu_{1},\mu_{2}):=\inf{\bigl\{\epsilon: \mu_{1}(A) \leq \mu_{2}(A^{\epsilon})+\epsilon,\, A \in \mathcal{B}(X)\bigr\} },
\quad \mu_{1},\mu_{2}\in \mathcal{M}(X)\, ,
\end{equation}
where
\[ A^{\epsilon}:=\{x' \in X: d(x,x') < \epsilon \, \, \, \text{for some} \, \, \,  x \in A\} \,  .\]
According to Dudley \cite[Thm.~11.3.3]{Dudley:2002}, when $X$ is separable
the Prokhorov metric metrizes weak convergence. Note that this definition produces the same metric
if we were to use the ``closed" inflated sets $ A^{\epsilon}:=\{x' \in X: d(x,x') \leq  \epsilon \, \,  \text{for some} \, \,  x \in A\} \, $ instead.
On the other hand, the  Kantorovich distance $d_{K}$ on the space  $\mathcal{M}(X)$ of Borel probability measures
on a separable metric space $X$ is defined as follows,
see Vershik \cite{vershik2006kantorovich} for a historical review:
Let \[ \snorm{f}_{L}:=\sup_{x_{1}\neq x_{2}}{\frac{|f(x_{1})-f(x_{2})|}{d(x_{1},x_{2})}} \, \]
denote the Lipschitz norm of a real valued function on $X$. Then the Kantorovich distance is defined by
\begin{equation}
\label{def_kantorovich}
d_{K}(\mu_{1},\mu_{2}):=\sup_{\snorm{f}_{L}\leq1}{\int{fd(\mu_{1}-\mu_{2})}}\, .
\end{equation}
 According to the remark after
 \cite[Lem.~11.8.3]{Dudley:2002}, $d_{K}$ is an extended metric on $\mathcal{M}(X)$.
Let $\Delta_{n}(X) \subset \mathcal{M}(X)$ denote the set of probability measures whose supports have at most
 $n$ points, and let $\ext(A)$ denote the set of extreme points of a set $A$.
We can now state our result for the Prokhorov metric  and Kantorovich extended metric.
  For either of these 
$\hat{d}:=d_{K}$ or $\hat{d}:=d_{Pr}$, for  
$\mu \in \mathcal{M}(X)$ we define $B_{\epsilon}(\mu_{n}):=\{\mu' \in \mathcal{M}(X):\hat{d}(\mu',\mu)\leq \epsilon\}$.\\
\begin{thm}
\label{thm_wozabal}
Let $X$ be a Borel subset of a Polish  metric space and consider the space $\mathcal{M}(X)$ of Borel probability measures equipped with the Prokhorov metric
or the Kantorovich extended metric. For $n \in \mathbb{N}$,  $\epsilon >0$ and $\mu_{n} \in \Delta_{n}(X)$,
 consider the 
ball  $B_{\epsilon}(\mu_{n})$  about the
measure $\mu_{n}$. Then
\[  \ext\bigl(B_{\epsilon}(\mu_{n})\bigr) \subset \Delta_{n+2}(X)\, . \]
\end{thm}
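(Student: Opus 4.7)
The strategy is to realize the ball $B_\epsilon(\mu_n)$ as the image, under the second-marginal projection, of a weakly closed convex set $\mathcal{K}$ of couplings on $X\times X$; to lift each extreme point of $B_\epsilon(\mu_n)$ back to an extreme point of $\mathcal{K}$ by means of Theorem~\ref{thm_winkler} and Lemma~\ref{lem_affineedtreme2}; and then to bound the support of the lifted measure using Winkler's theorem.

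The first step is to convert the distance constraint into a coupling constraint. Because $X$ is a Borel subset of a Polish space, Suslin's theorem implies that every Borel probability measure on $X$ is tight, so Dudley's form of the Kantorovich--Rubinstein theorem applies in the Kantorovich case and Strassen's theorem applies in the Prokhorov case. Both yield
\[
B_\epsilon(\mu_n)=\pi_{2*}(\mathcal{K}),\qquad \mathcal{K}:=\Bigl\{\gamma\in\mathcal{M}(X\times X)\,:\,\pi_{1*}\gamma=\mu_n,\ \textstyle\int h\,d\gamma\le\epsilon\Bigr\},
\]
where $h(x,y):=d(x,y)$ in the Kantorovich case and $h(x,y):=\one_{\{d(x,y)>\epsilon\}}$ in the Prokhorov case. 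The set $\mathcal{K}$ is convex and weakly closed: the marginal condition is weakly continuous, the Kantorovich integrand is non-negative lower semicontinuous, and for the Prokhorov integrand the map $\gamma\mapsto\gamma(\{d>\epsilon\})$ is weakly lower semicontinuous on the open set $\{d>\epsilon\}$ by the Portmanteau theorem.

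The second step is the lifting. Fix $\mu^\star\in\ext(B_\epsilon(\mu_n))$ and set $F:=\pi_{2*}^{-1}(\mu^\star)\cap\mathcal{K}$. Since $\mu^\star$ is extreme in the affine image $\pi_{2*}(\mathcal{K})$, $F$ is a face of $\mathcal{K}$; in particular it is a weakly closed convex set of Borel probability measures on the Borel subset $X\times X$ of a Polish space. Theorem~\ref{thm_winkler} then yields an extreme point $\gamma^\star$ of $F$, which by the face property is extreme in $\mathcal{K}$ and satisfies $\pi_{2*}\gamma^\star=\mu^\star$. This is precisely the lift supplied by Lemma~\ref{lem_affineedtreme2}.

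The third step is to count. Writing $\mu_n=\sum_{i=1}^n p_i\delta_{x_i}$, the set $\mathcal{K}$ is cut out of the probability measures on $X\times X$ by the $n$ moment equalities $\gamma(\{x_i\}\times X)=p_i$ (one per atom of $\mu_n$) together with the single moment inequality $\int h\,d\gamma\le\epsilon$, for a total of $n+1$ linear constraints. Winkler's theorem \cite{Winkler:1988} then forces $|\supp\gamma^\star|\le n+2$, whence $|\supp\mu^\star|=|\supp\pi_{2*}\gamma^\star|\le n+2$, so $\mu^\star\in\Delta_{n+2}(X)$, as required.

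The main obstacle is the second step, where an extreme point of the image must be lifted to an extreme point of $\mathcal{K}$ without any compactness hypothesis on $X$: this is exactly where Theorem~\ref{thm_winkler} replaces the Straszewicz/exposed-point device used by Wozabal. The same shortcut also explains the sharpening from $n+3$ to $n+2$, because bypassing the exposed-point detour removes one auxiliary continuous test function and hence saves one atom in the Winkler count.
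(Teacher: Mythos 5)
Your proposal is correct and follows essentially the same route as the paper: the same coupling set $\Gamma_{\mu_n,\epsilon}$, the same identification $P_2(\Gamma_{\mu_n,\epsilon})=B_\epsilon(\mu_n)$ via Strassen and Kantorovich--Rubinstein, the same lift of extreme points through Theorem~\ref{thm_winkler} and Lemma~\ref{lem_affineedtreme2}, and the same Winkler count of $n+1$ constraints giving $n+2$ atoms (you have simply inlined the content of Theorem~\ref{thm_wasser} rather than invoking it). The only details the paper spells out that you elide are the attainment of the infimum in the Prokhorov metric (needed to apply Strassen at level $\epsilon$) and the verification that $P_2(\Gamma_{\mu_n,\epsilon})\subset\mathcal{M}_1(X)$ so that Kantorovich--Rubinstein duality applies; both are routine.
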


Our path to Theorem \ref{thm_wozabal} requires the development of more useful results which we now describe.
 At the heart of the matter
 is a result of Winkler 
regarding the existence of extreme points of closed convex sets of probability measures
 that is
implicit in the results of Winkler \cite{winkler1978integral,Winkler:1988}.
 Since this result is more modest
than Winkler's goal of developing integral representations, the proof we present
 appears somewhat simpler, in particular
it is different
 in that it does
not utilize Lusin's Theorem.\\
\begin{thm}[Winkler]
\label{thm_winkler}
Let $X$ be a Borel subset of a Polish metric space and consider the set $\mathcal{M}(X)$ of probability measures
equipped with the weak topology. Then every nontrivial closed convex subset of  $\mathcal{M}(X)$
 has an extreme point.
\end{thm}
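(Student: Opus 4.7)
The main obstacle is that $\mathcal{M}(X)$ is generally not weakly compact, so the Krein--Milman theorem cannot be applied to $C$ directly. My plan is to compactify $X$, apply the Choquet--Bishop--de Leeuw integral representation on the compactified problem, and then transfer an extreme point back to $C$ via the Skorokhod representation theorem.

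\textbf{Step 1: Compactification and Choquet representation.} Since $X$ is a Borel subset of a Polish space, embed the ambient Polish space as a $G_{\delta}$ in a compact metric space $Z$ (for example, the Hilbert cube), so that $X$ becomes a Borel subset of $Z$. The inclusion $\iota : X \hookrightarrow Z$ induces a continuous, affine, injective pushforward $\iota_{*} : \mathcal{M}(X) \to \mathcal{M}(Z)$, whose image is exactly $\{\hat{\nu} \in \mathcal{M}(Z) : \hat{\nu}(X) = 1\}$. Let $\hat{C} \subset \mathcal{M}(Z)$ denote the weak closure of $\iota_{*}(C)$; it is compact, convex, and metrizable. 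Fix any $\mu_{0} \in C$ and apply the Choquet--Bishop--de Leeuw theorem to obtain a Borel probability measure $\lambda$ on $\ext(\hat{C})$ with $\iota_{*}\mu_{0} = \int \hat{\nu}\, d\lambda(\hat{\nu})$. The barycenter identity $\iota_{*}\mu_{0}(B) = \int \hat{\nu}(B)\, d\lambda$ holds first for every open $B \subset Z$ (approximate $\one_{B}$ from below by continuous functions and use monotone convergence), and then for every Borel $B$ by Dynkin's $\pi$--$\lambda$ theorem. Applied to $B = X$, this gives $1 = \int \hat{\nu}(X)\, d\lambda$; combined with $\hat{\nu}(X) \le 1$, we deduce $\hat{\nu}(X) = 1$ for $\lambda$-a.e.\ $\hat{\nu}$, and in particular some extreme point $\hat{\nu}^{*} \in \ext(\hat{C})$ satisfies $\hat{\nu}^{*}(X) = 1$.

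\textbf{Step 2: Descend to $C$ and verify extremality.} Choose a sequence $\iota_{*}\nu_{n}$ with $\nu_{n} \in C$ converging weakly to $\hat{\nu}^{*}$ in $\mathcal{M}(Z)$. Since $Z$ is Polish, Skorokhod's representation theorem yields random variables $\xi_{n}, \xi$ on a common probability space with $\xi_{n} \to \xi$ almost surely in $Z$ and with laws $\iota_{*}\nu_{n}$ and $\hat{\nu}^{*}$, respectively. The mass conditions $\iota_{*}\nu_{n}(X) = \hat{\nu}^{*}(X) = 1$ place $\xi_{n}, \xi$ in $X$ almost surely, so $\xi_{n} \to \xi$ in the subspace topology on $X$; bounded convergence then yields $\int f\, d\nu_{n} \to \int f\, d\nu^{*}$ for each bounded continuous $f$ on $X$, where $\nu^{*} := \hat{\nu}^{*}|_{X}$. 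Hence $\nu_{n} \to \nu^{*}$ weakly in $\mathcal{M}(X)$, and weak closedness of $C$ gives $\nu^{*} \in C$ with $\iota_{*}\nu^{*} = \hat{\nu}^{*}$. Finally, if $\nu^{*} = \tfrac{1}{2}(\nu_{1} + \nu_{2})$ with $\nu_{1}, \nu_{2} \in C$, pushforward gives $\hat{\nu}^{*} = \tfrac{1}{2}(\iota_{*}\nu_{1} + \iota_{*}\nu_{2})$ with $\iota_{*}\nu_{i} \in \hat{C}$, and extremality of $\hat{\nu}^{*}$ in $\hat{C}$ combined with injectivity of $\iota_{*}$ forces $\nu_{1} = \nu_{2} = \nu^{*}$.

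The hardest step is Step 1: the set $\iota_{*}(C) = \iota_{*}(\mathcal{M}(X)) \cap \hat{C}$ is a face of $\hat{C}$, but because $\hat{\nu} \mapsto \hat{\nu}(X)$ is only upper semicontinuous on $\mathcal{M}(Z)$ when $X$ is merely Borel, this face is not weakly closed, so Krein--Milman alone cannot produce an extreme point inside it. Choquet's integral representation is essential here because it propagates the constraint $\hat{\nu}(X) = 1$ from the given barycenter $\iota_{*}\mu_{0}$ down to $\lambda$-almost every atom of the decomposition, thereby guaranteeing an extreme point of $\hat{C}$ with full mass on $X$.
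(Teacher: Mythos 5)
Your proof is correct, and although it follows the same skeleton as the paper's proof---compactify, apply Choquet's integral representation to the weak closure of the image of $C$, use the fact that the barycenter puts full mass on $X$ to locate an extreme point of the closure that still charges $X$, and pull it back---the two technical pivots are executed genuinely differently. The paper does not embed $X$ into a compact space directly. Instead it fixes $\mu^*\in C$, uses its tightness to extract a $\sigma$-compact carrier $X_1=\cup_n K_n$ of full $\mu^*$-measure, refines the topology of $X_1$ to a locally compact second countable one, and passes to the one-point compactification $X_2$; the entire purpose of that construction is to make $X_1$ \emph{open} in $X_2$, so that $\eins_{X_1}$ is an increasing limit of continuous functions and the Choquet barycenter identity extends to it by monotone convergence alone. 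You bypass all of that by embedding in the Hilbert cube and extending the barycenter identity to \emph{every} Borel set via Dynkin's $\pi$--$\lambda$ theorem, which needs only that $X$ be Borel in $Z$; this is a real streamlining (tightness of measures on $X$ disappears from the argument, being absorbed into the compactness of $\mathcal{M}(Z)$), at the modest cost of verifying measurability of $\hat{\nu}\mapsto\hat{\nu}(B)$ and the disintegration identity for Borel $B$, which your monotone-class argument does handle. For the descent, the paper routes through the closedness of its transfer map $\iota$ and the extreme-subset machinery of the appendix (Lemmas \ref{lem_extremesubsets} and \ref{lem_affineedtreme2}), whereas you use Skorokhod representation followed by a direct midpoint check; the Skorokhod detour could even be omitted, since the portmanteau theorem shows directly that $\iota_*$ is a homeomorphism of $\mathcal{M}(X)$ onto $\{\hat{\nu}\in\mathcal{M}(Z):\hat{\nu}(X)=1\}$, but as written your argument is sound, the final extremality check being the easy direction precisely because $\iota_*$ is injective and affine with $\iota_*(C)\subset\hat{C}$.
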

Winkler's Theorem \ref{thm_winkler}
 is fundamental in the proof of our second main result, the following Theorem \ref{thm_wasser}, regarding
  the extreme points of the
Monge-Wasserstein distance. This result combined with the duality results of Strassen and Kantorovich-Rubinstein
  are then used to establish Theorem \ref{thm_wozabal}. Moreover, in Section \ref{sec_computation}, 
 Corollary \ref{cor_wasser}
to Theorem \ref{thm_wasser} establishes the main results
  to be used towards
  the  computation of
 supersets  of the extreme points $\ext\bigl(B_{\epsilon}(\mu_{n})\bigr)$,
 useful for convex maximization, in particular linear programming,  over the ball $B_{\epsilon}(\mu)$.

For any two probability measures $\mu_{1}, \mu_{2} \in \mathcal{M}(X)$, let
$M(\mu_{1},\mu_{2}) \subset \mathcal{M}(X\times X)$ denote those probability measures with marginals
$\mu_{1}$ and $\mu_{2}$. Then for a non-negative lower semicontinuous real-valued cost function
$c:X\times X\rightarrow \R$, the  Monge-Wasserstein distance $d_{W}$ on $ \mathcal{M}(X)$
is defined by
\[d_{W}(\mu_{1},\mu_{2}):=\inf_{\nu \in M(\mu_{1},\mu_{2})}{\int{c(x,x')d\nu(x,x')}}\, .\]
Let  $ P_{1}:\mathcal{M}(X\times X)\rightarrow \mathcal{M}(X)$ denote  the marginal map corresponding to the first component and $ P_{2}$ the marginal map with respect to the second component.\\
\begin{thm}
\label{thm_wasser}
Let $X$ be a Borel subset of a Polish  metric space and $c:X\times X \rightarrow \R$ a non-negative real-valued lower semicontinuous function.   For $n \in \mathbb{N}$,  $\epsilon >0$ and $\mu_{n} \in \Delta_{n}(X)$, consider the subset
  \[\Gamma_{\mu_{n},\epsilon}:=\{\nu \in \mathcal{M}(X\times X): P_{1}\nu=\mu_{n}, \, \int{c(x,x')d\nu(x,x')}\leq \epsilon\}\, .\]
Then
 \[\ext(\Gamma_{\mu_{n},\epsilon}) \subset \Delta_{n+2}(X\times X)\, \]
and
\[ P_{2}\bigl(\ext(\Gamma_{\mu_{n},\epsilon})\bigr) \supset \ext\bigl(P_{2}(\Gamma_{\mu_{n},\epsilon})\bigr) \, . \]
In particular, we have
\[\ext\bigl(P_{2}(\Gamma_{\mu_{n},\epsilon})\bigr) \subset  \Delta_{n+2}(X)\, .\]
\end{thm}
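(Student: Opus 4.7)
The plan is to establish the three inclusions in sequence.

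For the first inclusion $\ext(\Gamma_{\mu_n,\epsilon}) \subset \Delta_{n+2}(X \times X)$, write $\mu_n = \sum_{i=1}^n p_i\delta_{x_i}$ and express the constraint $P_1\nu = \mu_n$ as the $n$ measurable moment equalities $\int \mathbbm{1}_{\{x_i\}\times X}\, d\nu = p_i$. Together with the single inequality $\int c\, d\nu \le \epsilon$, this places $\Gamma_{\mu_n,\epsilon}$ in the form to which Winkler's \cite[Thm.~2.1]{Winkler:1988} applies: since $X$ is Borel in a Polish space, so is $X\times X$, hence $X\times X$ is Suslin and every Borel probability on it is tight, which is what allows Winkler's bound to function with merely measurable (as opposed to continuous) moment functions. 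For an extreme point $\nu^*$ of $\Gamma_{\mu_n,\epsilon}$, I would separate cases according to whether the cost constraint is active at $\nu^*$: if $\int c\, d\nu^* = \epsilon$, then Winkler applied to the $n+1$ equality moment constraints (together with normalization) produces at most $n+2$ atoms; if $\int c\, d\nu^* < \epsilon$, then in a weak neighborhood inside $\{P_1\nu = \mu_n\}$ the cost inequality is automatic, so $\nu^*$ is extreme in $\{\nu : P_1\nu = \mu_n\}$ as well, and Winkler with $n$ equalities yields at most $n+1$ atoms. Either way the bound $n+2$ holds.

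For the second inclusion $P_2\bigl(\ext(\Gamma_{\mu_n,\epsilon})\bigr) \supset \ext\bigl(P_2(\Gamma_{\mu_n,\epsilon})\bigr)$, fix $\mu' \in \ext\bigl(P_2(\Gamma_{\mu_n,\epsilon})\bigr)$ and consider the fiber $F := \Gamma_{\mu_n,\epsilon} \cap P_2^{-1}(\{\mu'\})$. This set is convex and nonempty by construction, and it is weakly closed because $P_1$ and $P_2$ are weakly continuous and $\nu\mapsto \int c\, d\nu$ is weakly lower semicontinuous (by Portmanteau applied to the lower semicontinuous integrand $c$). Theorem \ref{thm_winkler} then yields an extreme point $\nu^* \in \ext(F)$. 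To promote $\nu^*$ to an extreme point of $\Gamma_{\mu_n,\epsilon}$ (this is exactly the content of Lemma \ref{lem_affineedtreme2}), observe that any nontrivial convex decomposition $\nu^* = \tfrac{1}{2}(\nu_1+\nu_2)$ inside $\Gamma_{\mu_n,\epsilon}$ would project under $P_2$ to a decomposition of $\mu' = P_2\nu^*$; extremality of $\mu'$ forces $P_2\nu_1 = P_2\nu_2 = \mu'$, hence $\nu_1,\nu_2 \in F$, contradicting extremality of $\nu^*$ in $F$. Therefore $\mu' = P_2\nu^* \in P_2(\ext(\Gamma_{\mu_n,\epsilon}))$.

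The third inclusion then follows immediately: every $\mu' \in \ext(P_2(\Gamma_{\mu_n,\epsilon}))$ is the $P_2$-marginal of some $\nu^* \in \ext(\Gamma_{\mu_n,\epsilon}) \subset \Delta_{n+2}(X\times X)$, and the $P_2$-marginal of a measure supported on at most $n+2$ points is itself supported on at most $n+2$ points. The principal technical obstacle I anticipate is the first step: invoking Winkler's extreme-point bound in an inequality-sensitive, measurable-moment form strong enough to yield $n+2$, and carefully handling the tight/slack dichotomy so that the count emerges uniformly; the second step is a clean application of Theorem \ref{thm_winkler} and Lemma \ref{lem_affineedtreme2} once weak closedness of $F$ is in hand.
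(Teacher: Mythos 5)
Your proposal follows essentially the same route as the paper: the first inclusion via Winkler's moment-constraint bound applied to the $n$ marginal equalities plus the single cost constraint, the second via Theorem \ref{thm_winkler} applied to the weakly closed convex fiber $F=\Gamma_{\mu_{n},\epsilon}\cap P_{2}^{-1}(\mu')$ together with the argument of Lemma \ref{lem_affineedtreme2} (which you correctly reproduce inline), and the third by combining the two with the fact that marginals of sums of at most $n+2$ Dirac masses are sums of at most $n+2$ Dirac masses. Your treatment of the fiber's closedness is also the paper's (Lemma \ref{lem_lsc} is exactly the nonnegative-lsc Portmanteau statement you invoke).

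The one step I would push back on is the active/slack dichotomy in your first inclusion, which is both unnecessary and, as justified, incorrect in the slack branch. Lower semicontinuity of $\nu\mapsto\int c\,d\nu$ makes the sublevel set $\{\int c\,d\nu\leq\epsilon\}$ weakly \emph{closed}; it does not make the strict sublevel set $\{\int c\,d\nu<\epsilon\}$ weakly \emph{open}, so the claim that ``in a weak neighborhood inside $\{P_{1}\nu=\mu_{n}\}$ the cost inequality is automatic'' does not follow. The conclusion you want there is still true, but needs a line-segment argument instead of a topological one: if $\nu^{*}=\tfrac12(\nu_{1}+\nu_{2})$ with $P_{1}\nu_{i}=\mu_{n}$, then both $\int c\,d\nu_{i}$ are finite (their average is $\int c\,d\nu^{*}<\epsilon$), and replacing $\nu_{i}$ by $\nu^{*}+s(\nu_{i}-\nu^{*})$ for small $s>0$ yields a nontrivial decomposition inside $\Gamma_{\mu_{n},\epsilon}$. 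More simply, the case analysis can be dropped altogether: the version of Winkler's theorem the paper cites (\cite[Thm.~4.1, Rmk.~4.2]{OSSMO:2011}, derived from \cite[Thm.~2.1]{Winkler:1988}) already covers mixed equality/inequality moment constraints on tight measures with merely measurable constraint functions, so the $n+1$ constraints give the $n+2$ bound in one stroke, which is exactly what the paper does.
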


\section{Computation of supersets}
\label{sec_computation}
Now we show how
the duality results of Strassen and Kantorovich-Rubinstein combined with Theorem \ref{thm_wasser} can be used
in the  computation of
supersets of the extreme points of  $B_{\epsilon}(\mu_{n})$.
 To begin we introduce some terminology.
We say that a set $B$ is a {\em superset for} $B_{\epsilon}(\mu_{n})$ if
\begin{equation}
\label{def_superset}
\ext\bigl(B_{\epsilon}(\mu_{n})\bigr) \,\, \subset\,  B\, \subset\,\,  B_{\epsilon}(\mu_{n})\, .
\end{equation}
For any function $F$ which achieves its maximum at the extreme points, that is
\[\max_{\mu \in B_{\epsilon}(\mu_{n})}{F(\mu)}= \max_{\mu \in \ext(B_{\epsilon}(\mu_{n}))}{F(\mu)}\, ,\]
it follows that
\[\max_{\mu \in B_{\epsilon}(\mu_{n})}{F(\mu)}= \max_{\mu \in B}{F(\mu)}\, \]
for any superset $B$ for $B_{\epsilon}(\mu_{n})$.  Consequently, efficiently constructed supersets
 facilitate the efficient solution to optimization problems
over $B_{\epsilon}(\mu_{n})$.
To fix terms, we restrict our attention to the Prokhorov case, the Kantorovich case being essentially the same.
For fixed $\epsilon >0$ and $\mu_{n}\in \Delta_{n}$, let  us consider the  Prokhorov ball $B_{\epsilon}(\mu_{n})$.
Then it is clear that since $\ext\bigl(B_{\epsilon}(\mu_{n})\bigr) \subset B_{\epsilon}(\mu_{n})$
we obtain from Theorem \ref{thm_wozabal} that
\[ \ext\bigl(B_{\epsilon}(\mu_{n})\bigr) \subset  B_{\epsilon}(\mu_{n}) \cap \Delta_{n+2}(X)\,. \]
Since moreover,
$\ext\bigl(B_{\epsilon}(\mu_{n})\bigr) \subset \partial B_{\epsilon}(\mu_{n})$, where
$\partial B_{\epsilon}(\mu_{n}):=\{\mu \in \mathcal{M}(X): d_{Pr}(\mu,\mu_{n})=\epsilon\}$ is the sphere,
we also conclude that
\[ \ext\bigl(B_{\epsilon}(\mu_{n})\bigr) \subset  \partial B_{\epsilon}(\mu_{n})\cap \Delta_{n+2}(X)\,  .\]
However,  these supersets may be difficult to compute, so we look to Theorem  \ref{thm_wasser}
for sets generated by linear programming.
To that end,  write $\{d>\epsilon\}$ for the subset of elements $(x,y)\in X\times X$ such that $d(x,y)>\epsilon$, and consider
 the subset
 $\Gamma_{\mu_{n},\epsilon}\subset \mathcal{M}(X\times X)$ defined in the proof of Theorem \ref{thm_wozabal} by
\[ \Gamma_{\mu_{n},\epsilon}:= \Bigl\{\nu \in \mathcal{M}(X\times X):  \nu\{d>\epsilon\}\leq \epsilon,
\, P_{1}\nu=\mu_{n}\Bigr\}\, .\]
The proof of Theorem \ref{thm_wozabal} used Strassen's Theorem to assert  in \eqref{strassen} that
\begin{equation*}
  P_{2}\bigl(\Gamma_{\mu_{n},\epsilon}\bigr)=B_{\epsilon}(\mu_{n})\, .
\end{equation*}
Then Theorem \ref{thm_wasser}  implies
\begin{equation}
\label{e_r1}
 \ext(\Gamma_{\mu_{n},\epsilon}) \subset \Delta_{n+2}(X\times X)
\end{equation}
and the string of inequalities
\begin{eqnarray*}
\label{e_r2}
\ext\bigl(B_{\epsilon}(\mu_{n})\bigr) &=& \ext\bigl(P_{2}(\Gamma_{\mu_{n},\epsilon})\bigr)\\
&\subset& P_{2}\bigl(\ext(\Gamma_{\mu_{n},\epsilon})\bigr)\\
&\subset& \Delta_{n+2}(X)\, .
\end{eqnarray*}
Consequently, we obtain
\begin{cor}
\label{cor_wasser}
Consider the situation of Theorem \ref{thm_wozabal} and the set $\Gamma_{\mu_{n},\epsilon}$ defined in Theorem
\ref{thm_wasser}  by
$c:=d$ in the Kantorovich case and $c:=\eins_{d>\epsilon}$ in the Prokhorov case.
Then we have
\begin{eqnarray*}
\ext\bigl(B_{\epsilon}(\mu_{n})\bigr)\quad \subset & P_{2}\bigl(\ext(\Gamma_{\mu_{n},\epsilon}) \bigr) &
 \subset \quad  B_{\epsilon}(\mu_{n})\cap \Delta_{n+2}(X)\\
\ext\bigl(B_{\epsilon}(\mu_{n})\bigr)\quad \subset & P_{2}\bigl(\Gamma_{\mu_{n},\epsilon} \cap \Delta_{n+2}(X\times X)\bigr)
&\subset \quad  B_{\epsilon}(\mu_{n})\cap \Delta_{n+2}(X)\,  .
\end{eqnarray*}
\end{cor}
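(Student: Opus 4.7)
The plan is to assemble the corollary from two ingredients already available: Theorem \ref{thm_wasser}, and the duality identifications $P_{2}(\Gamma_{\mu_{n},\epsilon})=B_{\epsilon}(\mu_{n})$ that the narrative before the statement has already justified. Concretely, I would first verify that $P_{2}(\Gamma_{\mu_{n},\epsilon})=B_{\epsilon}(\mu_{n})$ in each of the two cases: in the Kantorovich case, $c:=d$ and Dudley's version \cite[Thm.~11.8.2]{Dudley:2002} of Kantorovich--Rubinstein gives $d_{K}(\mu_{n},P_{2}\nu)\le\int c\,d\nu\le \epsilon$ for every $\nu\in\Gamma_{\mu_{n},\epsilon}$ and, conversely, allows one to realize any $\mu\in B_{\epsilon}(\mu_{n})$ as $P_{2}\nu$ for an admissible coupling; in the Prokhorov case, $c:=\eins_{d>\epsilon}$ and Strassen's theorem identifies the constraint $\nu\{d>\epsilon\}\le \epsilon$ with $d_{Pr}(\mu_{n},P_{2}\nu)\le \epsilon$.

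Once $P_{2}(\Gamma_{\mu_{n},\epsilon})=B_{\epsilon}(\mu_{n})$ is in hand, the first row of inclusions follows from Theorem \ref{thm_wasser}. The left inclusion is
\[
\ext\bigl(B_{\epsilon}(\mu_{n})\bigr)=\ext\bigl(P_{2}(\Gamma_{\mu_{n},\epsilon})\bigr)\subset P_{2}\bigl(\ext(\Gamma_{\mu_{n},\epsilon})\bigr),
\]
which is precisely the second conclusion of Theorem \ref{thm_wasser}. For the right inclusion, any $\nu\in\ext(\Gamma_{\mu_{n},\epsilon})$ lies in $\Gamma_{\mu_{n},\epsilon}$, so $P_{2}\nu\in P_{2}(\Gamma_{\mu_{n},\epsilon})=B_{\epsilon}(\mu_{n})$; and by the first conclusion of Theorem \ref{thm_wasser}, $\nu\in\Delta_{n+2}(X\times X)$, so its marginal $P_{2}\nu$ has support of size at most $n+2$, giving $P_{2}\nu\in B_{\epsilon}(\mu_{n})\cap\Delta_{n+2}(X)$.

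For the second row I would use exactly the same two facts combined differently. Since Theorem \ref{thm_wasser} gives $\ext(\Gamma_{\mu_{n},\epsilon})\subset\Gamma_{\mu_{n},\epsilon}\cap\Delta_{n+2}(X\times X)$, applying $P_{2}$ and then chaining with the left inclusion of the first row yields
\[
\ext\bigl(B_{\epsilon}(\mu_{n})\bigr)\subset P_{2}\bigl(\ext(\Gamma_{\mu_{n},\epsilon})\bigr)\subset P_{2}\bigl(\Gamma_{\mu_{n},\epsilon}\cap\Delta_{n+2}(X\times X)\bigr).
\]
For the remaining right inclusion, any $\nu\in\Gamma_{\mu_{n},\epsilon}\cap\Delta_{n+2}(X\times X)$ has $P_{2}\nu\in P_{2}(\Gamma_{\mu_{n},\epsilon})=B_{\epsilon}(\mu_{n})$ and $P_{2}\nu\in\Delta_{n+2}(X)$ by the same support-size argument as above.

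There is no genuine obstacle: all the nontrivial content has been pushed into Theorem \ref{thm_wasser} and the two duality theorems. The only points deserving explicit mention are (i) the routine fact that the marginal of a measure supported on at most $n+2$ points of $X\times X$ is supported on at most $n+2$ points of $X$, and (ii) careful bookkeeping to ensure that each of the four inclusions receives its own one-line justification, so the reader can match the text of the corollary to the chain of inclusions displayed in the paragraph preceding it.
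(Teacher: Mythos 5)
Your proposal is correct and follows essentially the same route as the paper: it combines the identity $P_{2}(\Gamma_{\mu_{n},\epsilon})=B_{\epsilon}(\mu_{n})$ (Strassen in the Prokhorov case, Kantorovich--Rubinstein in the Kantorovich case, as established in the proof of Theorem \ref{thm_wozabal}) with the two conclusions of Theorem \ref{thm_wasser}, exactly as in the chain of inclusions the paper displays just before the corollary. Your only additions are the explicit one-line justifications of the two right-hand inclusions (via $\ext(\Gamma_{\mu_{n},\epsilon})\subset\Gamma_{\mu_{n},\epsilon}\cap\Delta_{n+2}(X\times X)$ and the fact that marginals of measures in $\Delta_{n+2}(X\times X)$ lie in $\Delta_{n+2}(X)$), which the paper leaves implicit.
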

The statement of Corollary \ref{cor_wasser} captures the mechanism by which we obtain
the improvement from $n+3$ to $n+2$ Dirac masses in the description of the extreme points
in Theorem \ref{thm_wozabal}. Indeed, since the set $\Gamma_{\mu_{n},\epsilon}$ is
 a set of measures subject to $n+1$ constraints, its extreme points are convex combination
of $n+2$ Dirac masses on the product space $X\times X$. Then the fact that
the extreme points of $B_{\epsilon}(\mu_{n})$ consists of the convex combination of 
$n+2$ Dirac masses follows from the fact that Corollary
\ref{cor_wasser} implies that the projection onto  the second component of these extreme points covers all  the 
extreme
points of $B_{\epsilon}(\mu_{n})$, and the fact that 
projection of Dirac masses on $X\times X$ are Dirac masses
on $X$. 

Corollary \ref{cor_wasser} also says that both 
\[ P_2\big(\Gamma_{\mu_{n},\epsilon} \cap  \Delta_{n+2}(X\times X) \big)\quad  \text{and}\quad
P_2\big(\ext(\Gamma_{\mu_{n},\epsilon})\big)\]
 are supersets
for $ B_{\epsilon}(\mu_{n})$.
Although the latter is smaller in that
\[  P_{2}\bigl(\ext(\Gamma_{\mu_{n},\epsilon})\bigr) \subset P_{2}\bigl(\Gamma_{\mu_{n},\epsilon} \cap  \Delta_{n+2}(X\times X)\bigr)\,,\]
 the computation of the former is useful in the computation of the latter,
so we consider the computation of both.

\subsection{Computing $\Gamma_{\mu_{n},\epsilon} \cap  \Delta_{n+2}(X\times X)$}
\label{sec_comp_p1}
Since, by  \eqref{e_r1}, both  $\ext(\Gamma_{\mu_{n},\epsilon})$
 and
$ \Gamma_{\mu_{n},\epsilon} \cap  \Delta_{n+2}(X\times X) $ are subsets of
$P_{1} ^{-1}\mu_{n}\cap \Delta_{n+2}(X\times X)$,  it will be convenient to compute $P_{1}^{-1}\mu_{n}\cap \Delta_{n+2}(X\times X)$
first. Let us proceed inductively, and assume that $\mu_{n} \in \Delta_{n}(X)$ but is not in $\Delta_{n-1}(X)$.
 Then
$\mu_{n}:=\sum_{i=1}^{n}{\beta_{i}\d_{y_{i}}}$   with $ \beta_{i}> 0, y_{i} \in X, i=1,..,n,
\, \, \sum_{i=1}^{n}{\beta_{i}}=1$, and
 $y_{i}\neq y_{j},i\neq j$.
Fixing this $y=(y_{i})$ and $(\beta_{i})$,
we now define some subsets of $\mathcal{M}(X\times X)$. For $x \in X^{m}, n \leq m \leq n+2$,
denote
\[\delta_{y,x}:= \sum_{k=1}^{n}{\beta_{k} \d_{y_{k},x_{k}}}\, ,\]
 and let
\begin{equation}
\label{def_pi0}
  \Pi_{0}
:=\bigl\{\delta_{y,x}\, x \in X^{n} \bigr\}\, .
\end{equation}
For $i=1,..,n$ and $x \in X^{n+1}$, define
 \begin{equation}
\label{def_p1x}
  \Pi_{i}(x)
:=\delta_{y,x}+\bigl\{\gamma(\d_{y_{i},x_{n+1}}- \d_{y_{i},x_{i}})
\,,\, 0 < \gamma < \beta_{i} \bigr\}\,
\end{equation}
and
\begin{equation}
\label{def_p1}
  \Pi_{i}:=\{\Pi_{i}(x), x \in X^{n+1}\}\, .
\end{equation}
Moreover, for $x \in X^{n+2}$ and
for $i<j$, define
\begin{equation}
\label{def_p2ax}
 \Pi_{i,j}(x):= \delta_{y,x}+
\Bigl\{\gamma_{i}(\d_{y_{i},x_{n+1}}-\d_{y_{i},x_{i}})
+\gamma_{j}(\d_{y_{j},x_{n+2}}-\d_{y_{j},x_{j}})\, ,\\
 0 < \gamma_{i}<\beta_{i}, 0 <  \gamma_{j}< \beta_{j} \Bigr\}
\end{equation}
while for
$i=j$, define
\begin{equation}
\label{def_p2bx}
 \Pi_{i,i}(x):=\delta_{y,x}+
\Bigl\{\gamma_{1}(\d_{y_{i},x_{n+1}} -\d_{y_{i},x_{i}})
 +
\gamma_{2}(\d_{y_{i},x_{n+2}}-\d_{y_{i},x_{i}}),\,\,\\
 \gamma_{1}>0, \gamma_{2}>0, \gamma_{1}+\gamma_{2} <  \beta_{i}\Bigr\}
\end{equation}
and then, for $i\leq j$, again take the union
\begin{equation}
\label{def_p2}
  \Pi_{i,j}:=\{\Pi_{i,j}(x), x \in X^{n+2}\}\, .
\end{equation}
\begin{lem}
\label{lem_eee}
In terms of the sets defined in \eqref{def_pi0}, \eqref{def_p1}, and \eqref{def_p2}, we have
\[P_{1}^{-1}\mu_{n}\cap \Delta_{n+2}(X\times X)=\Pi_{0}  \cup_{k=1}^{n}\Pi_{k} \cup_{i\leq j} \Pi_{i,j}\, .\]
\end{lem}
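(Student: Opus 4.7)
The plan is a two-sided inclusion reducing to a finite case analysis on how the mass $\beta_i$ at each $y_i$ is split among atoms of a lift $\nu$.

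For the inclusion $\supset$, I would verify directly from the defining formulas \eqref{def_pi0}, \eqref{def_p1x}, \eqref{def_p2ax}, \eqref{def_p2bx} that every element of $\Pi_0$, $\Pi_k$, or $\Pi_{i,j}$ lies in $P_1^{-1}\mu_n\cap \Delta_{n+2}(X\times X)$. In each case the element is written as $\delta_{y,x}$ plus a signed perturbation; the strict inequalities on $\gamma$, $\gamma_1$, $\gamma_2$ are precisely what is needed to keep all coefficients non-negative, each perturbation has total mass zero (so the result is a probability), the support manifestly has at most $n$, $n+1$, or $n+2$ points, and each perturbation is supported in fibers $\{y_i\}\times X$ with balanced signed mass so it projects to $0$ under $P_1$, leaving $P_1\nu=\mu_n$.

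For the converse inclusion, fix $\nu \in P_1^{-1}\mu_n\cap \Delta_{n+2}(X\times X)$. Since $P_1\nu=\mu_n$ is concentrated on $\{y_1,\ldots,y_n\}$, the support of $\nu$ lies in $\{y_1,\ldots,y_n\}\times X$, so one may write $\nu$ as a finite positive combination of atoms $\delta_{(y_i,z)}$. Let $m_i$ denote the number of atoms of $\nu$ whose first coordinate is $y_i$. Because $\beta_i>0$ each $m_i \geq 1$, and because $|\supp\nu|\leq n+2$ we have $\sum_{i=1}^n m_i \in \{n,n+1,n+2\}$. Hence the multiplicity profile $(m_1,\ldots,m_n)$ falls into exactly four cases: (i) all equal $1$; (ii) some $m_{i^*}=2$ and the rest equal $1$; (iii) some $m_{i^*}=3$ and the rest equal $1$; (iv) two distinct indices $i<j$ with $m_i=m_j=2$ and the rest equal $1$.

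Each case corresponds to one of the families. In case (i) the marginal condition forces $\nu=\sum_{k=1}^n \beta_k\delta_{(y_k,x_k)}=\delta_{y,x}$ for some $x\in X^n$, so $\nu\in \Pi_0$. In case (ii) label the two atoms above $y_{i^*}$ by second coordinates $x_{i^*}$ and $x_{n+1}$ with weights $\beta_{i^*}-\gamma$ and $\gamma$ (so $0<\gamma<\beta_{i^*}$), and for $k\neq i^*$ let $x_k$ be the unique second coordinate of the atom above $y_k$; then $\nu\in \Pi_{i^*}(x)$ by \eqref{def_p1x}. Cases (iii) and (iv) are analogous, matching \eqref{def_p2bx} (with $i=j=i^*$ and weights $\gamma_1,\gamma_2>0$, $\gamma_1+\gamma_2<\beta_{i^*}$) and \eqref{def_p2ax} (with independent constraints $0<\gamma_i<\beta_i$, $0<\gamma_j<\beta_j$), respectively. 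There is no substantive obstacle; the only care needed is in the bookkeeping, namely matching the ranges of the $\gamma$-parameters to the admissible weight splittings and allowing for the fact that the parameterization is not injective (e.g.\ if $x_{n+1}=x_i$ in $\Pi_i(x)$ the perturbation vanishes and one lands back in $\Pi_0$), which is harmless for a set-theoretic union.
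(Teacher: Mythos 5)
Your proposal is correct and follows essentially the same route as the paper: both reduce the converse inclusion to a case analysis on how the mass $\beta_i$ at each $y_i$ is distributed among at most $n+2$ atoms (your multiplicity profiles (i)--(iv) are exactly the paper's cases $m=n$, $m=n+1$, and the two $m=n+2$ subcases), followed by matching weights to the $\gamma$-parameters. The only difference is cosmetic: you spell out the $\supset$ direction explicitly, which the paper leaves implicit in its ``if and only if'' framing.
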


Using Lemma \ref{lem_eee}, we
can now obtain an almost explicit representation of
$ \Gamma_{\mu_{n},\epsilon} \cap  \Delta_{n+2}(X\times X)$, almost in the sense
that it will amount to an explicitly represented set subject to the constraint of a
  single explicitly computable function. To that end, let us combine the definitions
\eqref{def_pi0}, \eqref{def_p1}, and \eqref{def_p2} of $\Pi_{0}$,  $\Pi_{i}$ and $\Pi_{i,j}$ into one symbol with the introduction of a multiindex
  $\imath$
that can take the values $\imath=0$,  $ \imath =i$ for $i\in \{1,n\}$, or $\imath = (i,j)$ with $i\leq j$. Then, in this notation
$\Pi_{\imath}(x)$ will denote $\Pi_{0}(x)$ and imply $x \in X^{n}$ when $\imath=0$, it will
denote $\Pi_{i}(x)$ and imply $x \in X^{n+1}$ when $\imath=i$, and
denote $\Pi_{i,j}(x)$ and imply $x \in X^{n+2}$  when $\imath=(i,j)$.

  Since,
in general,  for
 $\nu:=\sum_{k=1}^{m}{\alpha_{k} \d_{x_{k},x'_{k}}}$ we have
\begin{equation}
\label{explicit}
  \nu\{d>\epsilon\}=
\sum_{k=1}^{m}{\alpha_{k}\eins_{d(x_{k},x'_{k})>\epsilon}}\, ,
\end{equation}
 it follows that the function
$ \nu \mapsto \nu\{d>\epsilon\}$ restricted to $  \Delta_{n+2}(X\times X)$ is explicitly computable.
Then, since
\begin{equation}
\label{id8}
\Gamma_{\mu_{n},\epsilon} \cap  \Delta_{n+2}(X\times X)=P_{1}^{-1}\mu_{n}\cap \Delta_{n+2}(X\times X)
\cap \bigl\{ \nu \in \mathcal{M}(X\times X): \nu\{d>\epsilon\} \leq \epsilon\bigr\}\, ,
\end{equation}
if we
 incorporate the constraint $ \nu\{d>\epsilon\} \leq \epsilon$
by defining
\begin{equation}
\label{def_barpi0}
  \bar{\Pi}_{\imath}(x):=\Pi_{\imath}(x)\cap \bigl\{ \nu \in \mathcal{M}(X\times X): \nu\{d>\epsilon\} \leq \epsilon\bigr\}\, ,
\end{equation}
along with their unions $\ \bar{\Pi}_{\imath}$ over $X^{n}$, $X^{n+1}$ and $X^{n+2}$ respectively,
then from the distributive law of set theory,  Lemma \ref{lem_eee} and \eqref{id8}, we conclude that
\begin{equation}
\label{id_GammaDelta}
\Gamma_{\mu_{n},\epsilon} \cap  \Delta_{n+2}(X\times X)=\bar{\Pi}_{0}  \cup_{k=1}^{n}\bar{\Pi}_{k} \cup_{i\leq j}
 \bar{\Pi}_{i,j} \, .
\end{equation}

\subsection{Computing $\ext(\Gamma_{\mu_{n},\epsilon})$}
\label{sec_comp_p2}
To compute $\ext(\Gamma_{\mu_{n},\epsilon})$ we use a stronger version of the characterization
of the extreme points found in Winkler \cite[Thm.~2.1]{Winkler:1988} than we used in Theorem
\ref{thm_wasser}, along with the computation of $P_{1}^{-1}\mu_{n}\cap \Delta_{n+2}(X\times X)$ from
Lemma \ref{lem_eee}. To that end, consider the constraint functions
$f_{i}:=\eins_{y_{i}\times X}, i=1,..,n$ (where $\eins_{y_{i}\times X}(a,b)=1$ if $a=y_i$ and $\eins_{y_{i}\times X}(a,b)=0$ if $a\not=y_i$) and
$f_{n+1}:=\eins_{d>\epsilon}$. Then Winkler's \cite[Thm.~2.1]{Winkler:1988} assertion
\begin{eqnarray*}
\ext(\Gamma_{\mu_{n},\epsilon})& \subset &  \Bigl\{\nu \in \Gamma_{\mu_{n},\epsilon}:\nu=\sum_{i=1}^{m}{\alpha_{i}\d_{x_{i},x'_{i}}}, 1 \leq m \leq n+2,
\alpha_{i} >0, x_{i},x'_{i} \in X, i=1,..,m,\\
&& \text{the vectors}\,\, \bigl(f_{1}(x_{i},x'_{i}), \ldots, f_{n+1}(x_{i},x'_{i}),1\bigr), \, i=1,..,m\,\,
\text{are linearly independent}
 \Bigr\}
\end{eqnarray*}
 amounts to
\begin{equation}
\label{winkler2}
\ext(\Gamma_{\mu_{n},\epsilon}) \subset   \Bigl\{\nu \in \Gamma_{\mu_{n},\epsilon}:\nu=\sum_{i=1}^{m}{\alpha_{i}\d_{x_{i},x'_{i}}}, 1 \leq m \leq n+2,
\alpha_{i} >0, x_{i},x'_{i} \in X, i=1,..,m,
\end{equation}
\[
 \text{the vectors}\,\, \bigl(\eins_{y_{1}}(x_{i}),..., \eins_{y_{n}}(x_{i}), \eins_{d(x_{i},x'_{i})>\epsilon},1\bigr), \, i=1,..,m\,\,
\text{are linearly independent}
 \Bigr\}\, .
\]

Since  Theorem \ref{thm_wasser} asserts that $\ext(\Gamma_{\mu_{n},\epsilon}) \subset \Delta_{n+2}(X \times X)$,
 it follows
that we can replace $\Gamma_{\mu_{n},\epsilon}$ by $\Gamma_{\mu_{n},\epsilon}\cap \Delta_{n+2}(X\times X)$
in the righthand side of \eqref{winkler2}. Having done so,
let us define
\begin{eqnarray}
\label{winkler3}
\bar{\Theta}&:= &  \Bigl\{\nu \in \Gamma_{\mu_{n},\epsilon}\cap \Delta_{n+2}(X\times X):\nu=\sum_{i=1}^{m}{\alpha_{i}\d_{x_{i},x'_{i}}}, 1 \leq m \leq n+2,
\alpha_{i} >0, x_{i},x'_{i} \in X, i=1,..,m,\notag\\
 &&\text{the vectors}\,\, \bigl(\eins_{y_{1}}(x_{i}),..., \eins_{y_{n}}(x_{i}), \eins_{d(x_{i},x'_{i})>\epsilon},1\bigr), \, i=1,..,m\,\,
\text{are linearly independent}
 \Bigr\}\, . \notag\\
\end{eqnarray}
to be the righthand side of
  \eqref{winkler2}. Then we have
\[  \ext(\Gamma_{\mu_{n},\epsilon})  \subset \bar{\Theta} \subset  \Gamma_{\mu_{n},\epsilon}\,  \]
and therefore $\bar{\Theta}$ is a superset for $\Gamma_{\mu_{n},\epsilon}$. To compute it,
for $i \in \{1,..,n\}$, let us define
\begin{equation}
\label{lambda1}
\Lambda_{i}:=\{x\in X^{n+1}: \eins_{d(y_{i},x_{n+1})> \epsilon}\neq \eins_{d(y_{i},x_{i})> \epsilon}  \}\, .
\end{equation}
and for $i<j$ define
\begin{equation}
\label{lambda2}
\Lambda_{i,j}:=\{x\in X^{n+2}: \eins_{d(y_{i},x_{n+1})> \epsilon}\neq \eins_{d(y_{i},x_{i})> \epsilon},\,
 \eins_{d(y_{j},x_{n+2})> \epsilon}\neq \eins_{d(y_{j},x_{j})> \epsilon} \}\, .
\end{equation}
\begin{lem}
\label{lem_eee2}
With $\Lambda_{i}$ defined in \eqref{lambda1}, $\Lambda_{i,j}$ defined in
\eqref{lambda2}, and  $\bar{\Pi}_{0}, \bar{\Pi}_{i}$ and
$\bar{\Pi}_{i,j}$ defined in
\eqref{def_barpi0}, we have
\[\bar{\Theta}= \bar{\Pi}_{0}  \cup_{k=1}^{n}(\bar{\Pi}_{i}\cap  \Lambda_{i}) \cup_{i< j}(\bar{\Pi}_{i,j}\cap  \Lambda_{i,j}) \, . \]
\end{lem}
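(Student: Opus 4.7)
The plan is to exploit the decomposition in \eqref{id_GammaDelta}, according to which every $\nu\in \Gamma_{\mu_{n},\epsilon}\cap \Delta_{n+2}(X\times X)$ lies in $\bar{\Pi}_{0}$, some $\bar{\Pi}_{i}$, or some $\bar{\Pi}_{i,j}$, and then to check, case by case, when the linear independence condition defining $\bar{\Theta}$ is satisfied. The key structural observation is that every atom $(x_{k},x'_{k})$ of any such $\nu$ has its first coordinate equal to some $y_{l}$ (with the $y_{l}$ pairwise distinct), so the associated vector $\bigl(\eins_{y_{1}}(x_{k}),\ldots,\eins_{y_{n}}(x_{k}),\eins_{d(x_{k},x'_{k})>\epsilon},1\bigr)$ has the form $(e_{l},*,1)$ with $*\in\{0,1\}$; linear independence of several such vectors then reduces to analyzing collisions among the $e_{l}$'s together with the two possible values of the indicator.

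For $\bar{\Pi}_{0}$, the atoms $(y_{k},x_{k})$ for $k=1,\ldots,n$ yield vectors $(e_{k},*,1)$ with the $e_{k}$ distinct, which are trivially independent; hence $\bar{\Pi}_{0}\subset \bar{\Theta}$. For $\bar{\Pi}_{i}(x)$, if $x_{n+1}=x_{i}$ the measure actually lies in $\bar{\Pi}_{0}$ and is already accounted for; otherwise the essential representation has $n+1$ atoms, producing one vector $(e_{k},*,1)$ for each $k\neq i$ together with the pair $(e_{i},\eins_{d(y_{i},x_{i})>\epsilon},1)$ and $(e_{i},\eins_{d(y_{i},x_{n+1})>\epsilon},1)$; these $n+1$ vectors are independent iff this pair is distinct, i.e.\ iff $x\in\Lambda_{i}$. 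The case $\bar{\Pi}_{i,j}(x)$ with $i<j$ is analogous, with the independence criterion applied separately at positions $y_{i}$ and $y_{j}$, giving $x\in\Lambda_{i,j}$.

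It then remains to show that $\bar{\Pi}_{i,i}$ never contributes. Here $\nu$ carries three atoms at $y_{i}$, namely $(y_{i},x_{i}),(y_{i},x_{n+1}),(y_{i},x_{n+2})$, whose associated vectors all have the form $(e_{i},*,1)$ and differ only in the middle coordinate, which takes values in $\{0,1\}$; by the pigeonhole principle two of them coincide and the triple is linearly dependent. This explains why only the strict inequality $i<j$ appears in the statement.

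The main obstacle, and the reason the lemma is not purely combinatorial, is the bookkeeping between the formal parametrizations $\Pi_{\imath}(x)$ (which may describe a $\nu$ with fewer essential atoms because of coincidences among the $x$-coordinates) and the essential atomic decomposition on which the linear independence condition is tested. The argument is kept honest by observing that each such degeneracy moves $\nu$ into a strictly smaller piece of the union that has already been handled, so the stated equality holds with no double-counting and no omissions.
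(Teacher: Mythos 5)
Your proposal is correct and follows essentially the same route as the paper: decompose via the $\Pi_{\imath}$ pieces from Lemma \ref{lem_eee}, observe that each atom's vector has the form $(e_{l},*,1)$ with $*\in\{0,1\}$, and reduce the linear independence test to the pairwise conditions $\Lambda_{i}$ and $\Lambda_{i,j}$, with the $\imath=(i,i)$ case ruled out because three vectors agreeing in all but one binary coordinate must be dependent. Your explicit handling of the degenerate parametrizations (atoms coinciding, so $\nu$ falls into a smaller piece already covered) is a point the paper treats only implicitly, via the no-duplicates requirement built into the definition of $\Theta$, but the substance is identical.
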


\begin{rmk}
For a reference measure $\mu:= \sum_{k=1}^{n}{\beta_{k} \d_{y_{k}}}$, it is interesting to note that
 the condition that a  measure
\[ \delta_{y,x}+\bigl\{\gamma(\d_{y_{i},x_{n+1}}- \d_{y_{i},x_{i}})
\,,\, 0 < \gamma < \beta_{i} \bigr\}\, \]
is a member of $\Pi_{i}\cap \Lambda_{i}$  amounts to the  splitting off of the mass $\beta_{i}$ on the Dirac
situated at $y_{i}$ into the convex sum of two Dirac masses, one situated
at $(y_{i},x_{i})$ and one at $(y_{i},x_{n+1})$, such that, between $x_{i}$ and $x_{n+1}$, one is inside
the ball of radius $\epsilon$ about $y_{i}$ and the other is outside it. Moreover, to be a member
of $\Pi_{i,j}$ with $i<j$ amounts to two such splits.
\end{rmk}

\subsection{Equivalence classes determined by the adjacency matrix}
For $x \in X^{m}, n \leq m \leq n+2$, let its adjacency matrix
$A(x)$ be defined by
\[ A^{i,j}(x):=\eins_{d(y_{i},x_{j})>\epsilon},\quad i=1,..,n,\, j=1,..,m\, . \]
Commensurate with our introduction of the multiindex  $\imath$, we use the expression $A(x)$ to mean
the $n\times m$ adjacency matrix when $x \in X^{m}$, for any $m=n,n+1,n+2$.
Since, by Lemma \ref{lem_eee2}, $\bar{\Theta}= \bar{\Pi}_{0}  \cup_{k=1}^{n}\bar{\Pi}_{k} \cup_{i\leq j} \bar{\Pi}_{i,j}$
and the latter are determined by conditions
 $\Lambda_{i}, i=1,..,n$,
 $\Lambda_{i,j}$ for  $i < j$, and
$\nu\{(z,z') \in X\times X: d(z,z')>\epsilon\} \leq \epsilon$, all of which, by the the evaluation  \eqref{explicit},
 only depend on the values of the adjacency matrix,  we obtain
 the following lemma.
It asserts that, for any point in $\bar{\Pi}_{0}$, $\bar{\Pi}_{i}$ or $\bar{\Pi}_{i,j}$,
if the second components $x$ of the Dirac masses are changed to $x'$ with the same adjacency matrix, then
the resulting sum of Dirac masses remains in  $\bar{\Pi}_{0}$, $\bar{\Pi}_{i}$ or $\bar{\Pi}_{i,j}$ respectively. Consequently,
it will be useful in the efficient exploration of the set $\bar{\Theta}$.
\begin{lem}
\label{lem_adjacency}
For $n\leq m \leq n+2$,  $x \in X^{m}$,  $ z\in X^{m}$ and $\alpha \in \R^{m}$,
consider $\mu(x):=\sum_{k=1}^{m}{\alpha_{k} \d_{z_{k},x_{k}}}$. If
 $\mu(x) \in \bar{\Pi}_{\imath}(x)$, then
 for all $x'$ such that $A(x')=A(x)$, we have
$\mu(x') \in \bar{\Pi}_{\imath}(x')$.
\end{lem}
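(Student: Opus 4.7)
The plan is to unwind the two conjuncts in $\bar\Pi_\imath(x) = \Pi_\imath(x)\cap\{\nu:\nu\{d>\epsilon\}\le\epsilon\}$ and verify that each is preserved when $x$ is replaced by any $x'$ with $A(x')=A(x)$. So I would proceed by a short structural analysis of the definitions \eqref{def_pi0}, \eqref{def_p1x}, \eqref{def_p2ax}, \eqref{def_p2bx}, followed by a direct application of the explicit formula \eqref{explicit}.

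First, for the structural condition $\mu(x)\in\Pi_\imath(x)$, inspection of the cases $\imath=0$, $\imath=i$ and $\imath=(i,j)$ shows that in each case an element of $\Pi_\imath(x)$ is a sum of Dirac masses whose first coordinates $z_k$ and weights $\alpha_k$ are determined entirely by $(\beta,\imath)$ together with the auxiliary scalar parameters $\gamma$ (or $\gamma_i,\gamma_j$), while only the second coordinates depend on $x$. Consequently, if $\mu(x)=\sum_k\alpha_k\delta_{z_k,x_k}\in\Pi_\imath(x)$, then $\mu(x')=\sum_k\alpha_k\delta_{z_k,x'_k}\in\Pi_\imath(x')$ for every $x'\in X^m$, with the same $(z,\alpha,\gamma)$ witnessing membership. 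This step does not even use the adjacency hypothesis.

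Second, for the constraint $\nu\{d>\epsilon\}\le\epsilon$, I would apply \eqref{explicit} to obtain
\[
\mu(x)\{d>\epsilon\}=\sum_{k=1}^m\alpha_k\eins_{d(z_k,x_k)>\epsilon},
\]
and the analogous identity for $\mu(x')$. The previous step identifies each $z_k$ with some $y_{i(k)}$ where $i(k)\in\{1,\ldots,n\}$, so that $\eins_{d(z_k,x_k)>\epsilon}=A^{i(k),k}(x)$ and $\eins_{d(z_k,x'_k)>\epsilon}=A^{i(k),k}(x')$. The hypothesis $A(x')=A(x)$ then forces $\mu(x')\{d>\epsilon\}=\mu(x)\{d>\epsilon\}\le\epsilon$, and combining with the first step gives $\mu(x')\in\bar\Pi_\imath(x')$.

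The main and only obstacle is notational: one has to track, uniformly across $\imath=0$, $\imath=i$ and $\imath=(i,j)$, the map $k\mapsto i(k)$ identifying the row of $A$ associated to each Dirac mass. This map is read off directly from the definitions (explicitly $i(k)=k$ for $k\le n$, $i(n+1)=i$, $i(n+2)=j$ in the $\Pi_{i,j}$ case, and the obvious restrictions in the other two cases), so no analytic content beyond \eqref{explicit} and the definition of $A$ is required.
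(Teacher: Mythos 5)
Your proposal is correct and follows essentially the same route as the paper, which justifies the lemma by the remark preceding it: the structural membership in $\Pi_{\imath}(x')$ is automatic since the first coordinates and weights do not depend on $x$, and the constraint $\nu\{d>\epsilon\}\leq\epsilon$ depends on $x$ only through the adjacency matrix via \eqref{explicit}. Your explicit tracking of the index map $k\mapsto i(k)$ merely spells out what the paper leaves implicit.
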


\section{Extreme points of a ball about an empirical measure}\label{sec6}
Empirical measures take the form
 $\mu_{n}=\frac{1}{n}\sum_{i=1}^{n}{\d_{y_{i}}}$, with $ y_{i}\in X, i=1,..,n$. When all the points
$y_{i}$ are unique, we can define
 $\beta_{i}:=\frac{1}{n}, i=1,..,n$ in the expressions of Section \ref{sec_computation},
when the points have duplicates things will be more complicated.  In the unique case, the definitions
\eqref{def_pi0}, \eqref{def_p1x}, \eqref{def_p2ax} and \eqref{def_p2bx}  of
$\Pi_{0}$, $\Pi_{i}(x)$ and $\Pi_{i,j}(x)$ take on a more symmetrical form, and since the case when
the central measure is an empirical measure is  an important application,
we spell them out.
To begin with, we have
\begin{equation*}
\label{def_de}
\delta_{y,x}=\frac{1}{n}\sum_{k=1}^{n}{\d_{y_{k},x_{k}}}\, .
\end{equation*}
 Moreover, the evaluation
of the constraint $\nu(d>\epsilon)\leq \epsilon$ also takes a simpler form, so that
constrained sets $\bar{\Pi}_{0}$, $\bar{\Pi}_{i}(x)$ and $\bar{\Pi}_{i,j}(x)$ appear as follows:
\begin{equation*}
\label{def_barpi0e}
  \bar{\Pi}_{0}
=\bigl\{  \delta_{y,x},\, x \in X^{n} \bigr\}\,
\end{equation*}
subject to the constraint
\begin{equation*}
\label{def_barpi0ec}
\frac{1}{n}\sum_{k=1}^{n}{\eins_{d(y_{k},x_{k})>\epsilon}}
 \leq \epsilon  \, ,
\end{equation*}
while for $i \in \{1,..,n\}$ we have
\begin{equation*}
\label{def_barp1xe}
 \bar{\Pi}_{i}(x)
=\delta_{y,x}+\frac{1}{n}\bigl\{\gamma(\d_{y_{i},x_{n+1}}- \d_{y_{i},x_{i}})
\,,\, 0 < \gamma < 1 \bigr\}\, \,
\end{equation*}
subject to the constraint
\begin{equation*}
\label{def_barp1xec}
\frac{1}{n}\sum_{k=1}^{n}{\eins_{d(y_{k},x_{k})>\epsilon}}+\gamma(\eins_{d(y_{i},x_{n+1})>\epsilon}-
\eins_{d(y_{i},x_{i})>\epsilon}) \leq \epsilon\, ,
\end{equation*}
and  for $i<j$ we have
\begin{equation*}
\label{def_barp2ax}
 \bar{\Pi}_{i,j}(x)= \delta_{y,x}+ \frac{1}{n}
\Bigl\{\gamma_{i}(\d_{y_{i},x_{n+1}}-\d_{y_{i},x_{i}})
+\gamma_{j}(\d_{y_{j},x_{n+2}}-\d_{y_{j},x_{j}})\, ,\\
 0 < \gamma_{i}< 1, 0 <  \gamma_{j}<  1 \Bigr\}
\end{equation*}
subject to the constraint
\begin{equation*}
\label{def_barp2axc}
\frac{1}{n}\sum_{k=1}^{n}{\eins_{d(y_{k},x_{k})>\epsilon}}+
\gamma_{i}(\eins_{d(y_{i},x_{n+1})>\epsilon}-\eins_{d(y_{i},x_{i})>\epsilon})+
\gamma_{j}(\eins_{d(y_{j},x_{n+2})>\epsilon}-\eins_{d(y_{j},x_{j})>\epsilon})\,  \leq \epsilon\, ,
\end{equation*}
and for $i=j$ we have
\begin{equation*}
\label{def_barp2bx}
\bar{\Pi}_{i,i}(x)=
\delta_{y,x}+
\Bigl\{\gamma_{1}(\d_{y_{i},x_{n+1}} -\d_{y_{i},x_{i}})
 +
\gamma_{2}(\d_{y_{i},x_{n+2}}-\d_{y_{i},x_{i}}),\,\,\\
 \gamma_{1}>0, \gamma_{2}>0, \gamma_{1}+\gamma_{2} <  1\Bigr\}
\end{equation*}
subject to the constraint
\begin{equation*}
\label{def_barp2bxc}
\frac{1}{n}\sum_{k=1}^{n}{\eins_{d(y_{k},x_{k})>\epsilon}}+
\gamma_{1}(\eins_{d(y_{i},x_{n+1})>\epsilon}-\eins_{d(y_{i},x_{i})>\epsilon})
+\gamma_{2}(\eins_{d(y_{i},x_{n+2})>\epsilon}-\eins_{d(y_{i},x_{i})>\epsilon})\, \leq \epsilon \,  .
\end{equation*}

\section{Appendix}

\subsection{Extreme subsets}
We begin by establishing a fundamental identity regarding the extreme subsets of extreme subsets\footnote{
the repetition here is not a typo} of an affine space.
 Since this terminology varies in the literature, we fix it now. Following \cite[Def.~7.61]{AliprantisBorder:2006}, we say that a set $E$ is an {\em extreme subset}
 of a subset $A\subset L$
of a real linear space $L$ if  $E \subset A$ and $\theta x +(1-\theta)y \in E$ with $x,y \in A,\, \theta \in (0,1)$, implies that
$x,y \in E$. Note that this definition does not require convexity. An {\em extreme point} of $A$ is an extreme subset of $A$ consisting of a single point. We say that a set $F$ is a {\em face} of a subset $A\subset L$
of a real linear space $L$ if it is a convex extreme subset of $A$.   The following lemma implies that
Simon \cite[Prop.~8.6]{Simon} is valid {\em  without assuming compactness or convexity}.
\begin{lem}
\label{lem_extremesubsets}
Let $A$ be a subset of a real linear space $L$ and let $E$ be an extreme subset of $A$. Then
$B$ is an extreme subset of $E$ if and only if $B\subset E$ and it is an extreme subset of $A$. In particular,
\[ \ext(E) = E\cap \ext(A)\, .\]
\end{lem}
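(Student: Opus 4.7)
The plan is to prove the biconditional directly from the definition of extreme subset, and then specialize to singletons to obtain the identity $\ext(E) = E\cap\ext(A)$. The key observation is that the definition of extreme subset is purely local, involving only the requirement that whenever a convex combination of two points of the ambient set lies in the subset, the two points themselves lie in the subset. Thus the whole lemma is a transitivity statement about this property.

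For the forward direction, suppose $B$ is an extreme subset of $E$. Then $B\subset E\subset A$, so the inclusion part is automatic. To show $B$ is an extreme subset of $A$, take $x,y\in A$ and $\theta\in(0,1)$ with $\theta x+(1-\theta)y\in B$. Since $B\subset E$, the combination lies in $E$, and since $E$ is an extreme subset of $A$, both $x$ and $y$ lie in $E$. Now applying that $B$ is extreme in $E$, we conclude $x,y\in B$, as required.

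For the reverse direction, suppose $B\subset E$ and $B$ is an extreme subset of $A$. Take $x,y\in E$ and $\theta\in(0,1)$ with $\theta x+(1-\theta)y\in B$. Since $E\subset A$, we have $x,y\in A$, and extremality of $B$ in $A$ yields $x,y\in B$, establishing that $B$ is an extreme subset of $E$. Both directions are pure bookkeeping on the definition; no convexity or compactness hypotheses are used, which is precisely the point of the footnoted remark about strengthening Simon's Proposition 8.6.

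For the final identity, I would specialize to the case where $B=\{p\}$ is a singleton, noting that $p\in\ext(A)$ means $\{p\}$ is an extreme subset of $A$, and similarly for $\ext(E)$. If $p\in\ext(E)$ then $\{p\}$ is extreme in $E$, hence by the forward direction extreme in $A$, so $p\in E\cap\ext(A)$. Conversely, if $p\in E\cap\ext(A)$, then $\{p\}\subset E$ and $\{p\}$ is extreme in $A$, so by the reverse direction $\{p\}$ is extreme in $E$, giving $p\in\ext(E)$. There is no real obstacle here; the only thing to be careful about is not implicitly using convexity of $E$ or $B$, since the lemma is asserted for arbitrary subsets.
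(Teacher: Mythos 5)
Your proof is correct and follows essentially the same argument as the paper's: both directions are handled by the same direct unwinding of the definition of extreme subset, using $E\subset A$ in one direction and the transitivity through $E$ in the other, with the identity $\ext(E)=E\cap\ext(A)$ obtained by specializing to singletons. No gaps.
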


\begin{proof}
The proof is identical to that of \cite[Prop.~8.6]{Simon}, but we reproduce it here so that the reader can confirm
that it is  valid without compactness or convexity assumptions.
First suppose that $B\subset E$ and $B$ is an extreme subset of $A$.  Then, by definition, if
$\theta x +(1-\theta)y \in B$, with $x,y \in A,\, \theta \in (0,1)$, then
$x,y \in B$. Since $E \subset A$, it follows that if we have
$\theta x +(1-\theta)y \in B$, with $x,y \in E, \theta \in (0,1)$,   that
$x,y \in B$. Consequently, since $B\subset E$, $B$ is an extreme subset of $E$. Now assume that $B$ is an extreme subset of $E$.
Then, if we have
$\theta x +(1-\theta)y \in B$, with $x,y \in A, \theta \in (0,1)$, the fact that $B\subset E$
and $E$ is an extreme subset of $A$ implies that $x,y\in E$. Then, since $B$ is an extreme subset of $E$,
it follows that $x,y\in B$. Since clearly $B \subset A$, we conclude that $B$ is an extreme subset of $A$.
\end{proof}

\subsection{Affine images of extreme points}
\label{sec_affineextreme}
Here we establish a fundamental result for affine transformations and extreme points of, possibly non-convex, subsets.
\begin{lem}
\label{lem_affineedtreme2}
Let $L$ and $L'$ be real linear spaces and
 $K\subset L$  a subset. Suppose that
  $G:K \rightarrow L'$ is the restriction of an affine transformation $G:L \rightarrow L'$  to  $K$
 such that $\ext(G^{-1}(k'))\neq \emptyset$
 for all $k'\in \ext(G(K))$.
Then $G(\ext(K)) \supset \ext(G(K))$.
\end{lem}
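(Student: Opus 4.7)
The plan is to fix an arbitrary $k' \in \ext(G(K))$ and produce a point $k \in \ext(K)$ with $G(k) = k'$. Writing $E := G^{-1}(k') \cap K$ (the preimage taken inside $K$), the strategy is: first show that $E$ is an extreme subset of $K$; then invoke Lemma \ref{lem_extremesubsets} to identify $\ext(E)$ with $E \cap \ext(K)$; and finally use the standing hypothesis that $\ext(E) \neq \emptyset$ to pick the desired $k$.

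The main step is to verify that $E$ is an extreme subset of $K$. Suppose $x, y \in K$ and $\theta \in (0,1)$ satisfy $\theta x + (1-\theta) y \in E$. Since $G$ is the restriction of an affine map on all of $L$, we have
\[
k' \;=\; G\bigl(\theta x + (1-\theta) y\bigr) \;=\; \theta G(x) + (1-\theta) G(y),
\]
with $G(x), G(y) \in G(K)$. Because $k' \in \ext(G(K))$, this forces $G(x) = G(y) = k'$, so $x, y \in E$, proving that $E$ is an extreme subset of $K$.

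Now apply Lemma \ref{lem_extremesubsets} with $A := K$ and $E$ as above: it gives $\ext(E) = E \cap \ext(K)$. By the hypothesis of the lemma, $\ext(E) = \ext\bigl(G^{-1}(k')\bigr)$ is nonempty, so we may choose $k \in \ext(E) \subset \ext(K)$. By construction $G(k) = k'$, so $k' \in G(\ext(K))$. Since $k' \in \ext(G(K))$ was arbitrary, we conclude $G(\ext(K)) \supset \ext(G(K))$.

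There is no real obstacle here — the only subtlety is to read $G^{-1}(k')$ as the preimage taken inside $K$ (which is the only sensible reading since $G$ is only given as a map out of $K$), and to note that affineness of $G$ on the ambient space $L$ (not merely on $K$) is exactly what is needed for $G(\theta x + (1-\theta)y) = \theta G(x) + (1-\theta)G(y)$ to make sense without assuming $K$ is convex. The argument does not require convexity or topology on $K$, matching the generality in which Lemma \ref{lem_extremesubsets} was proved.
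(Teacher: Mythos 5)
Your proof is correct and follows essentially the same route as the paper's: show that $G^{-1}(k')$ is an extreme subset of $K$ using extremality of $k'$ and affineness of $G$ on $L$, then apply Lemma \ref{lem_extremesubsets} and the nonemptiness hypothesis to extract an extreme point of $K$ mapping to $k'$. No gaps.
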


\begin{proof}
Let $k' \in \ext(G(K))$ and consider
 any point $k \in G^{-1}(k')$. Then if
$k=\theta k_{1} +(1-\theta)k_{2}$, with $k_{1},k_{2} \in K, \, \theta \in (0,1)$, then
$k'= G(k)=G(\theta k_{1} +(1-\theta)k_{2} )=\theta G(k_{1})+(1-\theta) G(k_{2})$, so that,  since
$k'$ is an extreme point, it follows that $G(k_{1})=G(k_{2})=G(k)$. That is, $G^{-1}(k')$ is an extreme subset of $K$.
Therefore,  Lemma \ref{lem_extremesubsets} implies that
\[\ext\bigl(G^{-1}(k')\bigr)=G^{-1}(k')\cap \ext(K)\, ,\]
so that  any extreme point of $G^{-1}(k')$ is an extreme point of $K$.
Since, by assumption, $G^{-1}(k')$ has an extreme point, it follows that any such extreme point is
 an extreme point of $K$. Since the image under $G$ of any such point is $k'$, and $k' \in \ext(G(K))$
was arbitrary, the assertion follows.
\end{proof}

\subsection{Integrals of extended real-valued lower semicontinuous functions}
Here we formulate a  generalization to extended real-valued functions of \cite[Thm.~15.5]{AliprantisBorder:2006}, that the integral of a bounded lower semicontinuous function forms a lower semicontinuous function in the weak topology.
\begin{lem}
\label{lem_lsc}
Let $(X,d)$ be a metric space and $f:X \rightarrow \bar{R}_{+}$ a nonnegative lower semicontinuous extended real-valued
function. For $\mu \in \mathcal{M}(X)$ define $\int{fd\mu}$ to be the integral if $f$ is $\mu$-integrable and
$\infty$ if it is not. Then the function $F:\mathcal{M}(X) \rightarrow \bar{R}$ defined by
$F(\mu):=\int{fd\mu}$ is lower semicontinuous in the weak topology.
\end{lem}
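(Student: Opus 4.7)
The plan is to reduce to the bounded case already established in \cite[Thm.~15.5]{AliprantisBorder:2006} via a monotone approximation from below, and then invoke the fact that a pointwise supremum of lower semicontinuous functions is lower semicontinuous.

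First, for each integer $k\geq 1$, I would set $f_{k}:=\min(f,k)$. Each $f_{k}$ is nonnegative, bounded above by $k$, and lower semicontinuous: for any real $c<k$ one has $\{f_{k}>c\}=\{f>c\}$, which is open by the lower semicontinuity of $f$, and for $c\geq k$ one has $\{f_{k}>c\}=\emptyset$. Consequently \cite[Thm.~15.5]{AliprantisBorder:2006} applies to each $f_{k}$, and the function $F_{k}:\mathcal{M}(X)\to\mathbb{R}$ defined by $F_{k}(\mu):=\int f_{k}\,d\mu$ is lower semicontinuous in the weak topology.

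Next, I would identify $F$ as the pointwise supremum of the $F_{k}$. For any fixed $\mu \in \mathcal{M}(X)$, the sequence $(f_{k})$ increases pointwise to $f$ on $X$, so the monotone convergence theorem yields $\int f_{k}\,d\mu \uparrow \int f\,d\mu$ in $[0,\infty]$. When $f$ is $\mu$-integrable, this supremum is finite and equals $\int f\,d\mu$; when $f$ fails to be $\mu$-integrable, the supremum is $+\infty$, matching the convention $F(\mu)=\infty$ used in the statement. Either way, $F(\mu)=\sup_{k}F_{k}(\mu)$ for every $\mu \in \mathcal{M}(X)$.

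Finally, a pointwise supremum of lower semicontinuous functions is lower semicontinuous, since for any $c\in\mathbb{R}$ the superlevel set $\{F>c\}=\bigcup_{k}\{F_{k}>c\}$ is a union of weakly open sets and hence weakly open. This concludes the argument. There is no real obstacle here; the only subtle point is the non-integrable case, which is handled cleanly because monotone convergence delivers the value $+\infty$ in exactly the situation where the statement's convention also assigns $+\infty$.
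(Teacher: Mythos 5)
Your proof is correct, and it is a legitimately different (and leaner) route than the one in the paper, even though both start from the same clipping $f_k:=\min(f,k)$. You treat the bounded lower semicontinuous case as a black box by citing \cite[Thm.~15.5]{AliprantisBorder:2006} directly, and then finish with the purely topological fact that a pointwise supremum of lower semicontinuous functions is lower semicontinuous, verified via the superlevel sets $\{F>c\}=\bigcup_{k}\{F_{k}>c\}$ --- a characterization that is valid whether or not the weak topology is metrizable, so no net argument is needed. The paper instead essentially re-proves the bounded case en route: it approximates each clipped function $f^{s}$ from below by bounded continuous (Lipschitz) functions using \cite[Thm.~3.13]{AliprantisBorder:2006}, diagonalizes to get an increasing sequence of bounded continuous $f_{n}\uparrow f$, and then runs a net/$\liminf$ argument directly from the definition of weak convergence, together with monotone convergence. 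What your version buys is brevity and modularity, since the lemma is explicitly advertised as a generalization of \cite[Thm.~15.5]{AliprantisBorder:2006}; what the paper's version buys is self-containedness, reducing everything to integrals of bounded continuous functions and hence relying only on the definition of the weak topology. Your handling of the non-integrable case via monotone convergence in $[0,\infty]$ is also fine, since $f$, being lower semicontinuous, is Borel measurable and nonnegative, so failure of integrability is exactly the statement that $\sup_{k}\int f_{k}\,d\mu=+\infty$.
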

\begin{proof}
We follow   Aliprantis and Border \cite[Thm.~15.5]{AliprantisBorder:2006}.
First let us clip the function $f$ at the level $s$ by $f^{s}(x):=\min{(f(x),s)}, x \in X$. Then since
for all $c$ we have
$\{x:f^{s}(x) \leq c\} =\{x:f(x) \leq c\} $ for $s >c$ and $\{x:f^{s}(x) \leq c\} =\{x:f(x) \leq s\}  $ for
$s \leq c$ it follows that $f^{s}$ is a real-valued semicontinuous function. Consequently, by
 \cite[Thm.~3.13]{AliprantisBorder:2006}  for each $s$, $f^{s}$ is the increasing pointwise limit of a sequence
$f_{n}^{s}$ of Lipschitz continuous functions. By further clipping from below at $0$,
  sending $f_{n}^{s} \mapsto \max{(f^{s}_{n},0)}$ we
obtain that we can assume that for each $s$, $f^{s}$ is the increasing pointwise limit of a sequence
$f_{n}^{s}$ of nonnegative  bounded continuous functions. Therefore, setting $s:=n$ and  defining
$f_{n}:=f_{n}^{n}$, we conclude that
$f$ is the increasing pointwise limit of a sequence $f_{n}$ of bounded continuous nonnegative real-valued functions.

Now let $\mu_{\alpha}$ be a net such that $\mu_{\alpha}
\rightarrow   \mu$  in the weak topology and let us utilize the integration theory for extended real-valued functions as found in Ash \cite[Sec.~1]{Ash:1972}.
Then it follows that
\begin{equation}
\label{e2}
\int{f_{n}d\mu_{\alpha}} \xrightarrow{\alpha} \int{f_{n}d\mu}
\end{equation}
and
\begin{equation}
\label{e2}
\int{f_{n}d\mu_{\alpha}} \leq \int{fd\mu_{\alpha}}
\end{equation}
so that we conclude that
\[\int{f_{n}d\mu} \leq \lim \inf_{\alpha}{\int{fd\mu_{\alpha}}}\, ,\]
for each $n$. Therefore, from the monotone convergence theorem for extended valued functions, see e.g.~Ash
 \cite[1.6.2]{Ash:1972}, we have
\[ \int{fd\mu}=\lim_{n\rightarrow \infty}{\int{f_{n}d\mu}} \] and
 we conclude that
\[\int{fd\mu}=\lim_{n\rightarrow \infty}{\int{f_{n}d\mu}} \leq  \lim \inf_{\alpha}{\int{fd\mu_{\alpha}} }\, , \]
so that the assertion follows from the alternative characterization of lower semicontinuous  extended real-valued
 functions
 \cite[Lem.~2.42]{AliprantisBorder:2006}.
\end{proof}

\section{Proofs}

\subsection{Proof of Theorem \ref{thm_winkler}}
We follow the proof of the main result in  \cite{winkler1978integral}, simplifying it according to our more modest goal.
Let $t$ denote the topology of $X$.
Since $X$ is
a Borel subset of a Polish space, it follows that it is Suslin and therefore all finite Borel  measures
on $(X,t)$ are tight.
  Let $C\subset \mathcal{M}(X)$ be a nontrivial closed convex subset and consider $\mu^{*} \in C$.
Since $\mu^{*}$ is tight, using
a recursive argument, we obtain a sequence $K_{n} \subset X, n \in \mathbb{N}$ of  disjoint compact subsets
such that if we define
$X_{1}:=\cup_{n \in \mathbb{N}}{K_{n}}$ we have
$\mu^{*}(X_{1})=1$.
Let the relative topology of
  the subspace $X_{1} \subset X$ be denoted by $t_{0}$ and introduce a finer topology $t_{1}\supset t_{0}$
defined by
$A \in t_{1}$ if, for every $n \in \mathbb{N}$, we have   $A\cap K_{n}=B_{n} \cap K_{n}$ for some $B_{n} \in t$.
It follows that $K_{n} \in t_{1}$ for all $n \in \mathbb{N}$, so that $(X_{1},t_{1})$ is locally compact. Moreover,
since $(X_{1},t_{0})$ is metric, it is Hausdorff, and since $t_{1}$ is finer than $t_{0}$ it follows that
$(X_{1},t_{1})$ is Hausdorff.
Let us show that $(X_{1},t_{1})$ is also completely regular. To that end, recall,
 see e.g.~Willard  \cite[Thm.~14.12]{Willard}, that
  a space is completely regular if and only if
 its topology is the initial topology
corresponding to the bounded continuous functions.
Since $(X_{1},t_{0})$ is metric it is completely regular.  Consequently
 the topology $t_{1}$ amounts  to the initial topology corresponding to the addition of the set of indicator functions
$\eins_{K_{n}}, n \in \mathbb{N}$ to the collection of continuous functions on $(X_{1},t_{0})$. Therefore,
 $(X_{1},t_{1})$ is also completely regular.
 Since
 $(X,t)$ is Suslin it is second countable and therefore $(X_{1},t_{0})$ is second countable.
Since a base for the topology $t_{1}$ can be constructed by taking a base for $(X_{1},t_{0})$  and  taking all
intersections with the sets $K_{n}, n \in \mathbb{N}$, it follows that  $(X_{1},t_{1})$ is second countable.
Consequently, all  the spaces  $(X,t), (X_{1},t_{0})$ and $(X_{1},t_{1})$ are second countable.

Now observe that for $A\in t_{1}$ we have $ A=\cup_{n \in \mathbb{N}}{A\cap K_{n}}$
and for each $n$, we have $ A\cap K_{n}=B_{n} \cap K_{n}$ for some $B_{n} \in t$. Since both $B_{n}$ and $K_{n}$
are in $\mathcal{B}(t)$ it follows  that the intersection is also and therefore
also the countable union $A=\cup_{n\in \mathbb{N}}{A\cap K_{n}}$. That is, $ A\in \mathcal{B}(t)$ and since
$A\subset X_{1}$ it follows that $ A\in \mathcal{B}(t_{0})$. Since $t_{1}$ is finer than $t_{0}$,  we conclude
that
\[  \mathcal{B}(t_{0}) =\mathcal{B}(t_{1})\, \]
and therefore
\begin{equation}
\label{eq_t0t1}  \mathcal{M}(X_{1},t_{0})=\mathcal{M}(X_{1},t_{1})\,
\end{equation}
{\em as sets}.

Since $(X_{1},t_{1})$ is locally compact and Hausdorff, we  consider
 the Alexandroff one-point compactification $(X_{2},t_{2})$ of $(X_{1},t_{1})$.
 Since
 $(X_{1},t_{1})$ is second countable,
 it follows, see e.g.~\cite[Thm.~3.44]{AliprantisBorder:2006}, that the compactification
$(X_{2},t_{2})$ is metrizable.  Consequently, $(X_{2},t_{2})$ is a compact metrizable Hausdorff space, and
so it follows, see e.g.~\cite[Thm.~15.11]{AliprantisBorder:2006}, that $\mathcal{M}(X_{2},t_{2})$ is compact and metrizable. Moreover, since by  e.g.~\cite[Lem.~3.26 \& Thm.~3.28]{AliprantisBorder:2006}, all compact metrizable spaces are separable and therefore second countable, it follows that  $\mathcal{M}(X_{2},t_{2})$  is
second countable.

Define
\[\mathcal{M}_{X_{1}}(X,t)=\{\mu \in \mathcal{M}(X,t): \mu(X_{1})=1\}\]
\[\mathcal{M}_{X_{1}}(X_{2},t_{2})=\{\mu \in \mathcal{M}(X_{2},t_{2}): \mu(X_{1})=1\}\]
where $X_{1}\subset X_{2}$ is the subset identification corresponding to the compactification.
Since both $\mathcal{M}(X,t)$ and $ \mathcal{M}(X_{2},t_{2})$ are second countable, it follows that the subspaces
$\mathcal{M}_{X_{1}}(X,t)$ and $\mathcal{M}_{X_{1}}(X_{2},t_{2})$ are second countable.
Since $(X_{2},t_{2})$ is compact and Hausdorff it follows from \cite[Thm.~17.10 \& Cor.~15.7]{Willard} that
$(X_{2},t_{2})$ is completely regular. Consequently,
  if we let
\begin{eqnarray*}
i^{0}:(X_{1},t_{0})& \rightarrow & (X,t)\\
i^{1}:(X_{1},t_{1})& \rightarrow  &(X_{2},t_{2})
\end{eqnarray*}
denote the two subset injections, then since both $(X_{1},t_{0})$ and $(X_{2},t_{2})$
are completely regular,
Bourbaki \cite[Prop.~8, Sec.~5.3]{Bourbaki2} implies
  that the pushforward maps
\begin{eqnarray*}
&i^{0}_{*}:\mathcal{M}(X_{1},t_{0})\rightarrow  \mathcal{M}_{X_{1}}(X,t) , & \\
&i^{1}_{*}:\mathcal{M}(X_{1},t_{1}) \rightarrow \mathcal{M}_{X_{1}}(X_{2},t_{2}), &
\end{eqnarray*}
are  homeomorphisms,
  Because of the identity
\eqref{eq_t0t1} it is natural
to  define
\[ \iota:\mathcal{M}_{X_{1}}(X,t) \rightarrow \mathcal{M}_{X_{1}}(X_{2},t_{2})\]
by
\[\iota:=i^{1}_{*}(i^{0}_{*})^{-1}\, .\]
Although each component $i^{0}_{*}$ and $i^{1}_{*}$ of $\iota$ is a homeomorphism, since we have
$ \mathcal{M}(X_{1},t_{0})=\mathcal{M}(X_{1},t_{1})$ only
 as sets, $\iota$ may not be a homeomorphism. However, since $t_{1}$ is finer than $t_{0}$ it follows that the identity
map $\acute{\iota}:\mathcal{M}(X_{1},t_{1}) \rightarrow \mathcal{M}(X_{1},t_{0})$ is continuous, and if we more properly
 write
\[\iota:=i^{1}_{*}(\acute{\iota})^{-1}(i^{0}_{*})^{-1}\, \]
as a composition of three maps on topological spaces, it follows from the continuity of $\acute{\iota}$
and the fact that $i^{0}_{*}$ and $i^{1}_{*}$ are  homeomorphisms, that
\begin{equation}
\label{eq_closed}
\iota \,\, \text{is a closed map}\, .
\end{equation}

Now define
\[C_{0}:=C\cap \mathcal{M}_{X_{1}}(X,t)\, \]
\[C_{2}:=\iota C_{0}\, \]
and
\[ \bar{C}_{2}:= \text{the closure of} \, \, C_{2}\, \text{in}\,  \mathcal{M}(X_{2},t_{2})\, .\]
 Since $\iota$
 is affine it follows that $C_{2}$ is convex.
Moreover, since $C_{0}$ is relatively closed in $\mathcal{M}_{X_{1}}(X,t)$  and by \eqref{eq_closed}
$\iota$ is a closed map, it follows that
$C_{2}=\iota C_{0}$
is relatively closed in  $\mathcal{M}_{X_{1}}(X_{2},t_{2})$.
Consequently, there exists a closed set $\acute{C}_{2}\subset \mathcal{M}(X_{2},t_{2})$ such that
$C_{2}=\acute{C}_{2}\cap \mathcal{M}_{X_{1}}(X_{2},t_{2})$. Since it follows that
$\acute{C}_{2} \supset C_{2}$ we obtain
\[C_{2}\subset \bar{C}_{2}\subset \acute{C}_{2}\]
 and
therefore
\begin{eqnarray*}
C_{2}& =&C_{2}\cap \mathcal{M}_{X_{1}}(X_{2},t_{2})\\
& \subset &\bar{C}_{2}\cap \mathcal{M}_{X_{1}}(X_{2},t_{2})\\
&\subset&  \acute{C}_{2}\cap \mathcal{M}_{X_{1}}(X_{2},t_{2})\\
&=&C_{2}
\end{eqnarray*}
so that we conclude that
\begin{equation}
\label{eq_gggg}
C_{2}=\bar{C}_{2}\cap \mathcal{M}_{X_{1}}(X_{2},t_{2})\, .
\end{equation}

It is easy to show that both $\mathcal{M}_{X_{1}}(X,t) \subset \mathcal{M}(X,t)$ and
$ \mathcal{M}_{X_{1}}(X_{2},t_{2}) \subset  \mathcal{M}(X_{2},t_{2})$ are extreme subsets.
Therefore, it follows from Lemma \ref{lem_affineedtreme2}
that
\begin{equation}
\label{C0}
 \ext(C_{0})=\ext(C)\cap \mathcal{M}_{X_{1}}(X,t)\,
 \end{equation}
and
\begin{equation}
\label{C2}
\ext(C_{2})=\ext(\bar{C}_{2})\cap \mathcal{M}_{X_{1}}(X_{2},t_{2})\, .
\end{equation}
Since $\iota$  is a composition of affine bijections, it is an affine bijection, so  that we have
\[\ext(C_{2})=\iota  \ext(C_{0})\, .\]

Finally,  observe that
 $\mu^{*}$, selected at the beginning of the proof, satisfies $\mu{*} \in \mathcal{M}_{X_{1}}(X,t)$. Therefore  it
follows that $C_{0}$ and therefore  $C_{2}:=\iota C_{0}$ and $\bar{C}_{2}$
 are not empty.
 Consequently, since $\bar{C}_{2}\subset
\mathcal{M}(X_{2},t_{2})$ is closed and $\mathcal{M}(X_{2},t_{2})$ compact  it follows that $\bar{C}_{2}$ is compact, and since
$\mathcal{M}(X_{2},t_{2})$ is locally convex and metrizable, it follows from Choquet's Theorem  for metrizable compact convex sets, see  Alfsen \cite[Cor.~I.4.9]{Alfsen}, that each element $\mu \in \bar{C}_{2}$ has an integral representation
over the boundary $\ext(\bar{C}_{2})$. That is, $ \ext(\bar{C}_{2}) \neq \emptyset$ is  measurable, and  for $\mu \in \bar{C}_{2}$ there exists a probability measure $p$
on  $\ext(\bar{C}_{2})$   such that for all continuous functions $f$ on $\bar{C}_{2}$, we have
\[\mu(f)=\int_{\ext(\bar{C}_{2})}{\nu(f)dp(\nu)}\, .\]
where $\mu(f)$ and $\nu(f)$ denote the integrals $\int{fd\mu}$ and $\int{fd\nu}$.

Consider the open subset $X_{1} \subset X_{2}$. Since $X_{1}$ is a metric space, it follows,
see e.g.~\cite[Cor.~3.14]{AliprantisBorder:2006}, that the indicator function
$\eins_{X_{1}}$ is the increasing pointwise limit of a sequence of continuous functions $f_{n}, n \in \mathbb{N}$
 with values in
$[0,1]$. Since $\bar{C}_{2}$ is a subset of a metrizable second countable space, it too is
 metrizable and  second countable,  and therefore it follows from \cite[Lem.~3.4]{AliprantisBorder:2006}
that it is separable. Consequently,
 \cite[Thm.~15.13]{AliprantisBorder:2006}  implies that the function $\nu \mapsto \nu(f)$
 is measurable for all  bounded measurable  functions $f$. Therefore,
  by the monotone convergence theorem \cite[Thm.~1.6.2]{Ash:1972} applied  three times:
 to the left hand side,  to the integrand of the righthand side, and
 to the integral on the righthand side, we conclude
that
\begin{equation}
\label{eq_uuu}
\mu(X_{1})=\int_{\ext(\bar{C}_{2})}{\nu(X_{1})dp(\nu)}\, .
\end{equation}
Since $C_{2} \subset \bar{C}_{2}$, it follows that  $\mu \in C_{2}$ has a representing measure $p$
such that    integral formula \eqref{eq_uuu} holds.
Since $\mu \in C_{2}$, the equality
$\mu(X_{1})=1$  implies that $\nu(X_{1})=1$ $p$-almost everywhere. In particular, there exists a $\nu \in  \bar{C}_{2}$
such that $\nu(X_{1})=1$. That is, $\ext(\bar{C}_{2})\cap \mathcal{M}_{X_{1}}(X_{2},t_{2})\neq \emptyset$. Since
by \eqref{C2}  $\ext(C_{2})=\ext(\bar{C}_{2})\cap \mathcal{M}_{X_{1}}(X_{2},t_{2})$ it follows
that $\ext(C_{2}) \neq \emptyset$. Furthermore,
 the relation $ \iota \ext(C_{0})=\ext(C_{2})$ implies that
$\ext(C_{0})\neq \emptyset$,  and the relation $\ext(C_{0})=\ext(C)\cap \mathcal{M}_{X_{1}}(X,t)$
implies that $\ext(C)\neq \emptyset$,
 which is the assertion of the theorem.

\subsection{Proof of Theorem \ref{thm_wasser}}
It is straightforward to show that $X\times X$ is a Borel subset of the Polish metric space determined by
the product of the ambient Polish metric spaces. Therefore,
 Suslin's Theorem, see e.g.~Kechris \cite[Thm.~14.2]{Kechris:1995}, implies that  both $X$ and $X\times X$ are
Suslin, and therefore by
Dellacherie and Meyer  \cite[III.69]{DellacherieMeyer:1975}, it follows 
that all probability measures in both $\mathcal{M}(X)$ and $\mathcal{M}(X\times X)$ are tight. This tightness facilitates both the existence of extreme points for  convex sets
 of measures, useful  in obtaining the assertion,   and
the duality theorems of Strassen and Kantorovich-Rubinstein used in the proof of Theorem \ref{thm_wozabal}.

Lemma \ref{lem_lsc} implies that $\{\nu \in  \mathcal{M}(X\times X): \int{c(x,x')d\nu(x,x')}\leq \epsilon\}$
is closed and convex in the weak topology. Moreover,  by Aliprantis and Border \cite[Thm.~15.14]{AliprantisBorder:2006}
the marginal maps $P_{1}$ and $P_{2}$ are continuous in the weak topologies. Since singletons in $\mathcal{M}(X)$ are closed, for $\mu \in \mathcal{M}(X)$,
it follows that $\{\nu \in  \mathcal{M}(X\times X):P_{1}\nu=\mu_{n}\}$,
  $\{\nu \in  \mathcal{M}(X\times X):P_{2}\nu=\mu\}$ are also closed and convex,  and therefore
$\Gamma_{\mu_{n},\epsilon} \cap P_{2}^{-1}\mu$ is closed and convex in the weak topology.
Since $\Gamma_{\mu_{n},\epsilon} \cap P_{2}^{-1}\mu$ is nonemtpy,   Winkler's Theorem \ref{thm_winkler}
implies
  that it possesses an extreme point.
Therefore
Lemma \ref{lem_affineedtreme2}
 implies that
\[ P_{2}(\ext(\Gamma_{\mu_{n},\epsilon})) \supset \ext\bigl(P_{2}(\Gamma_{\mu_{n},\epsilon})\bigr)\, , \]
establishing the second assertion.

 For the first,  let us describe $\ext(\Gamma_{\mu_{n},\epsilon})$. To that end,
 write $\mu_{n}=\sum_{i=1}^{n}{\alpha_{i}\d_{x_{i}}}$
with $\alpha_{i}\geq 0, x_{i}\in X, i=1,..,n$ and $\sum_{i=1}^{n}{\alpha_{i}}=1$.
Then
 consider the $n+1$ constraint functions $c$ and
$\eins_{\{x_{i}\}\times X}, i=1,..,n$ to define $\Gamma_{\mu_{n},\epsilon}$ as  inequality/equality constraints defined by integrals of measurable functions on
$\mathcal{M}(X\times X)$.  Then
\cite[Thm.~4.1, Rmk.~4.2]{OSSMO:2011} (derived from Winkler \cite[Thm.~2.1]{Winkler:1988}, which is a
consequence of Dubins \cite{Dubins}) implies that
\[\ext(\Gamma_{\mu_{n},\epsilon}) \subset \Delta_{n+2}(X\times X)\, ,\]
establishing the first
 assertion. The third assertion follows by combining the first two and
$P_{2}\bigl(\Delta_{n+2}(X\times X)\bigr)=\Delta_{n+2}(X)$.

\subsection{Proof of Theorem \ref{thm_wozabal}}
Since $X$ is a Borel subset in a Polish metric space,
 Suslin's Theorem, see e.g.~Kechris \cite[Thm.~14.2]{Kechris:1995}, implies  that  $X$  is
Suslin, and therefore by
Dellacherie and Meyer  \cite[III.69]{DellacherieMeyer:1975}, it follows that
 all probability measures in $\mathcal{M}(X)$ are tight.

Let us first begin with the Prokhorov case.
 We use the Prokhorov metric on $\mathcal{M}(X\times X)$.
Consider the subset $\Gamma_{\mu_{n},\epsilon}\subset \mathcal{M}(X\times X)$ defined by
\[ \Gamma_{\mu_{n},\epsilon}:= \Bigl\{\nu \in \mathcal{M}(X\times X):  \nu\{d>\epsilon\}\leq \epsilon,
\, P_{1}\nu=\mu_{n}\Bigr\}\, .\]
For any $\nu \in \Gamma_{\mu_{n},\epsilon}$, for $\mu':=P_{2}\nu$ it follows that
$P_{1}\nu=\mu_{n}$, $P_{2}\nu=\mu'$ and
$\nu\{d>\epsilon\}\leq \epsilon$, so that
by the Prokhorov-Ky Fan inequality \cite[Thm.~11.3.5]{Dudley:2002} it follows that
$d_{Pr}(\mu',\mu_{n}) \leq \epsilon$, that is $\mu' \in B_{\epsilon}(\mu_{n})$, so that we conclude
that
\begin{equation}
\label{f1} P_{2}(\Gamma_{\mu_{n},\epsilon}) \subset B_{\epsilon}(\mu_{n})\, .
\end{equation}

To obtain the reverse inequality,
  let us  first note that the $\inf$ in the definition
\eqref{def_Pr} of the Prokhorov metric can be replaced by a $\min$. To see this,
observe that for fixed $A \in \mathcal{B}(X)$, that the parametrized family of open sets
$A^{\epsilon}, \epsilon >0$ is increasing. Consequently, if $\epsilon_{n}\downarrow \epsilon'$,
then for any $\mu \in \mathcal{M}(X)$ we have
$\mu(A^{\epsilon_{n}}) \downarrow \mu(A^{\epsilon'}) $, so that,  for fixed $A \in \mathcal{B}(X)$  and
$\mu_{1},\mu_{2} \in \mathcal{M}(X)$, the interval
$\{\epsilon:\mu_{1}(A) \leq \mu_{2}(A^{\epsilon})+\epsilon\}$
is closed. It follows that the intersection of these closed intervals
$\{\epsilon:\mu_{1}(A) \leq \mu_{2}(A^{\epsilon})+\epsilon, \, A \in  \mathcal{B}(X)\}$
 over all $A \in  \mathcal{B}(X)$ is closed. Therefore the infimum in
the definition \eqref{def_Pr} is attained.

Now consider  $\mu \in B_{\epsilon}(\mu_{n})$ and define $\epsilon^{*}:=d_{Pr}(\mu_{n},\mu)$.
Then by the previous remark we have
\[ \mu(A) \leq \mu_{n}(A^{\epsilon^{*}})+\epsilon^{*},\quad  A \in \mathcal{B}(X)\,\, \]
and the inequality $\epsilon^{*}\leq \epsilon$ implies that
\[ \mu(A) \leq \mu_{n}(A^{\epsilon})+\epsilon,\quad  A \in \mathcal{B}(X)\, . \]
 Moreover,  if we denote
$d(x,A):=\inf_{y \in A}{d(x,y)}$ then it is easy to see that
$A^{\epsilon}=\{x\in X: d(x,A) < \epsilon\}$ and defining
$A^{\epsilon \small ]}=\{x\in X: d(x,A) \leq \epsilon\}$  we obtain that
\[ \mu(A) \leq \mu_{n}(A^{\epsilon]})+\epsilon,\quad  A \in \mathcal{B}(X)\, . \]
Then, since both  $\mu$ and $\mu_{n}$ are tight, Dudley's \cite[Thm.~11.6.2]{Dudley:2002}
extension of Strassen's Theorem to tight measures on separable metric spaces
 implies that there exists a probability measure $
\nu \in \mathcal{M}(X\times X)$ such that $P_{1}\nu=\mu_{n}$, $P_{2}\nu=\mu$
and
$\nu\{
d>\epsilon\}  \leq \epsilon$,
that is, there exists a $\nu \in \Gamma_{\mu_{n},\epsilon}$ such that $P_{2}\nu =\mu$, so that
we obtain
\[P_{2}\bigl(\Gamma_{\mu_{n},\epsilon}\bigr)\supset B_{\epsilon}(\mu_{n}) \]
and,  so by \eqref{f1}, conclude that
\begin{equation}
\label{strassen}  P_{2}\bigl(\Gamma_{\mu_{n},\epsilon}\bigr)=B_{\epsilon}(\mu_{n})\, .
\end{equation}

Since the metric $d$ is a continuous function, it follows that the set $\{
(x,x') \in X\times X: d(x,x')>\epsilon\}$ is open and therefore the indicator
function $\eins_{d>\epsilon}$
 is lower semicontinuous. Therefore,
we can apply Theorem \ref{thm_wasser} to obtain
\begin{eqnarray*}
\ext\bigl(B_{\epsilon}(\mu_{n})\bigr) &=& \ext\bigl(P_{2}(\Gamma_{\mu_{n},\epsilon})\bigr)\\
&\subset& \Delta_{n+2}(X)
\end{eqnarray*}
establishing the assertion.

Now let us consider the
 Kantorovich case. To that end,
 let $\mathcal{M}_{1}(X) \subset \mathcal{M}(X)$  denote those Borel probability measures  $\mu$ such that
$\int{d(x',x)d\mu(x)}<\infty$ for some $x' \in X$, and
consider  the Monge-Wasserstein distance $d_{W}$ on $ \mathcal{M}_{1}(X)$
defined by
\[d_{W}(\mu_{1},\mu_{2}):=\inf_{\nu \in M(\mu_{1},\mu_{2})}{\int{d(x,x')d\nu(x,x')}}\, .\]
Then the Kantorovich-Rubinstein Theorem \cite[Thm.~11.8.2]{Dudley:2002} states that for all
$\mu_{1},\mu_{2} \in \mathcal{M}_{1}(X)$ we have
\[d_{K}(\mu_{1},\mu_{2})=d_{W}(\mu_{1},\mu_{2})\, ,\]
and if $\mu_{1}$ and $\mu_{2}$ are tight, that there is a measure
in $\mathcal{M}(X\times X)$ at which  the infimum in the definition of $d_{W}$ is attained.

Define $\Gamma_{\mu_{n},\epsilon} \subset \mathcal{M}(X\times X)$ by
\[ \Gamma_{\mu_{n},\epsilon}:= \Bigl\{\nu \in \mathcal{M}(X\times X): \int{d(x,x')d\nu(x,x')} \leq \epsilon,
\, P_{1}\nu=\mu_{n}\Bigr\} \, , \] and for
 $\nu \in \Gamma_{\mu_{n},\epsilon}$, consider $\mu:=P_{2}\nu$. Then, for $y \in X$, we have
\begin{eqnarray*}
\int{d(y,x')d\mu(x')}&=& \int{d(y,x')d\nu(x,x')}\\
&\leq& \int{\bigl(d(y,x)+d(x,x')\bigr)d\nu(x,x')}\\
&=& \int{d(y,x)d\nu(x,x')}+\int{d(x,x')d\nu(x,x')}\\
&=& \int{d(y,x)d\mu_{n}(x)}+\int{d(x,x')d\nu(x,x')}\\
&\leq& \int{d(y,x)d\mu_{n}(x)}+ \epsilon\, ,
\end{eqnarray*}
and since $\mu_{n}$ is a finite convex  sum of Dirac masses, it follows that
$\int{d(y,x')d\mu(x')} <\infty$, that is,  $P_{2}\nu \in \mathcal{M}_{1}(X)$, so that we conclude that
\[ P_{2}(\Gamma_{\mu_{n},\epsilon}) \subset \mathcal{M}_{1}(X)\, .\]

Since
  all measures
 in $\mathcal{M}_{1}(X)$ are tight,
   the Kantorovich-Rubinstein Theorem
then implies that
\[ P_{2}(\Gamma_{\mu_{n},\epsilon})=B_{\epsilon}(\mu_{n})\, \]
in the same way that the  Strassen Theorem implied it in \eqref{strassen} for the Prokhorov metric.
Moreover, since $d$ is a metric, it  is non-negative, real-valued and continuous,
 so  it follows that it is a non-negative semicontinuous
real-valued function.  As in the Prokhorov case, Theorem \ref{thm_wasser}  then yields the assertion.

\subsection{Proof of Lemma \ref{lem_eee}}
Since an element $\nu \in \Delta_{n+2}(X\times X) $  may have support smaller than $n+2$,  we
represent it  by
$\nu=\sum_{i=1}^{m}{\alpha_{i}\d_{x_{i},x'_{i}}}$, $\alpha_{i}> 0,x_{i},x'_{i}\in X, i=1,..,m, \sum_{i=1}^{m}{\alpha_{i}}=1$, for $m \leq n+2$, where we also require $(x_{i},x'_{i})\neq (x_{j},x'_{j}), i\neq j$.
Such an element $\nu \in \Delta_{n+2}(X\times X) $ is a member of
$P_{1}^{-1}\mu_{n}\cap \Delta_{n+2}(X\times X)$  if and only if
$P_{1}\nu=\mu_{n}$. Therefore, we conclude that $\nu \in P_{1}^{-1}\mu_{n}\cap \Delta_{n+2}(X\times X)$
if and only if
\[\sum_{j=1}^{m}{\alpha_{j}\d_{x_{j}}} = \sum_{i=1}^{n}{\beta_{i}\d_{y_{i}}}\, .
\]
Since $\beta_{i}>0,i=1,..,n$ and $\alpha_{j}>0,j=1,..,m$ it follows that
\[\{x_{j},j=1,..,m\} = \{y_{i},i=1,..,n\}\, .\]

In particular,
 $m$ must satisfy $n \leq m \leq n+2$. Moreover,  the three possible cases
 $m=n, n+1, n+2$ appear as follows: when $m=n$,
 there is a relabeling of the indices of $(x_{j}, x'_{j}), j=1,..,n$  so that
$x_{i}=y_{i}, \alpha_{i}=\beta_{i}, i=1,..,n$.  When $m=n+1$, there is a $j_{1}\in \{1,..,n\}$ and a relabeling so that
$x_{i}=y_{i}, i=1,..,n$ and $x_{n+1}=y_{j_{1}}$.  Then we also have
$\alpha_{i}=\beta_{i},i \neq j_{1}$ and $\alpha_{j_{1}}+\alpha_{n+1}=\beta_{j_{1}}$.
When $m=n+2$, then there is a relabeling so that
$x_{i}=y_{i}, i=1,..,n$ and either  1) there is a  $j_{1}\in \{1,..,n\}$ such that
 $x_{n+1}=x_{x+2}=y_{j_{1}}$ and
$\alpha_{i}=\beta_{i},i \neq j_{1}$ and $\alpha_{j_{1}}+\alpha_{n+1}+\alpha_{n+2}=\beta_{j_{1}}$ or 2)
there are two distinct values $j_{1}, j_{2} \in \{1,..,n\}$  such
that   $x_{n+1}=y_{j_{1}}$,  $x_{n+2}=y_{j_{2}}$,
$\alpha_{i}=\beta_{i},i \neq j_{1}\,i \neq j_{2}$,  $\alpha_{j_{1}}+\alpha_{n+1}=\beta_{j_{1}}$ , and
$\alpha_{j_{2}}+\alpha_{n+2}=\beta_{j_{2}}$.
It is clear the the $m=n$ case amounts to the statement $\nu \in \Pi_{0}$ defined in \eqref{def_pi0}. Let us now
show that the $m=n+1$ and $m=n+2$ cases amount to
the statements $\nu \in \Pi_{i}$ for some $i$
and $\nu \in \Pi_{i,j}$ for some $i\leq j$, defined in  \eqref{def_p1}, and \eqref{def_p2} respectively, establishing
the assertion.

To that end, for the $m=n+1$ case, the above assertion states that there is an $i \in \{1,..,n\}$
and an $x \in X^{n+1}$ such that
\[\nu =\sum_{k\neq i, k \in \{1,n\}}{\beta_{k} \d_{y_{k},x_{k}}} +\alpha_{i}\d_{y_{i},x_{i}}+\alpha_{n+1}
\d_{y_{i},x_{n+1}}\]
with $\alpha_{i}+\alpha_{n+1}=\beta_{i}$.
Since
\begin{eqnarray*}
\sum_{k\neq i, k \in \{1,n\}}{\beta_{k} \d_{y_{k},x_{k}}} +\alpha_{i}\d_{y_{i},x_{i}}+\alpha_{n+1}\d_{y_{i},x_{n+1}}
&=& \delta_{y,x} +(\alpha_{i}-\beta_{i})\d_{y_{i},x_{i}}+\alpha_{n+1}\d_{y_{i},x_{n+1}}\\
&=& \delta_{y,x} +\alpha_{n+1}(\d_{y_{i},x_{n+1}}-
\d_{y_{i},x_{i}})\, ,
\end{eqnarray*}
by the identification $\gamma:=\alpha_{n+1}$, we conclude that $\nu \in \Pi_{i}$ defined in \eqref{def_p1}. The proof in the $m=n+2$ case is essentially the same.

\subsection{Proof of Lemma \ref{lem_eee2}}
 Let us define
\begin{eqnarray}
\label{def_theta}
\Theta&:=&   \Bigl\{\nu \in P^{-1}_{1}\mu_{n}\cap \Delta_{n+2}(X\times X) :\nu=\sum_{i=1}^{m}{\alpha_{i}\d_{x_{i},x'_{i}}}, 1 \leq m \leq n+2,
\alpha_{i} >0, x_{i},x'_{i} \in X, i=1,..,m,\notag \\
&& \text{the vectors}\,\, \bigl(\eins_{y_{1}}(x_{i}),..., \eins_{y_{n}}(x_{i}), \eins_{d(x_{i},x'_{i})>\epsilon},1\bigr), \, i=1,..,m\,\,
\text{are linearly independent}
 \Bigr\}\, . \notag\\
\end{eqnarray}
 Then
 the identity
\[ \Gamma_{\mu_{n},\epsilon}=P^{-1}_{1}\mu_{n} \cap \bigl\{\nu \in \mathcal{M}(X\times X): \nu\{d> \epsilon\} \leq \epsilon \bigr\}\] implies that
\begin{equation}
\label{eq_ttt}
\bar{\Theta}=\Theta \cap \bigl\{\nu \in \mathcal{M}(X\times X): \nu\{d> \epsilon\} \leq \epsilon \bigr\}\, .
\end{equation}

As in Section \ref{sec_comp_p1},
let us compute $\bar{\Theta}$ by first computing $\Theta$ and then using the
identity \eqref{eq_ttt}.
To that end,  observe that the definition \eqref{def_theta}  of $\Theta$ implies that the support points
$(x_{i},x'_{i}), i=1,..,m$ contain no duplicates so that we can apply Lemma \ref{lem_eee} which implies that
  we can constrain the values of $m$ in the definition of $\Theta$ to
 $n \leq m \leq n+2$. Moreover, $\Theta$ is defined in terms
of $P^{-1}_{1}\mu_{n}\cap \Delta_{n+2}(X\times X)$, and by Lemma \ref{lem_eee} we have
$P^{-1}_{1}\mu_{n}\cap \Delta_{n+2}(X\times X)=\Pi_{0}  \cup_{k=1}^{n}\Pi_{k} \cup_{i\leq j} \Pi_{i,j}$.
Consequently, using the multiindex $\imath$ introduced above  \eqref{def_barpi0}, it is natural to define
\begin{eqnarray*}
\Theta_{\imath}&:=&\Theta \cap \Pi_{\imath}
\end{eqnarray*}
and
observe that
\[\Theta=\Theta_{0}  \cup_{k=1}^{n}\Theta_{k} \cup_{i\leq j} \Theta_{i,j}.\]

First consider $\Theta_{0}$. Since
the definition of $\Pi_{0}$ implies that $\{x_{j},j=1,..,n\}$ must be a permutation
of $\{y_{i},i=1,..,n\}$, it follows that the linear independence condition of \eqref{def_theta} is satisfied in this case.
That is,
\begin{equation}
\label{theta0}
\Theta_{0}=\Pi_{0}\, .
\end{equation}
Now consider $\Pi_{i}$ for $i \in \{1,..,n\}$.
Then the definition \eqref{def_p1} of $\Pi_{i}$ implies that, upon relabeling, that
the linear independence
of the set
$\bigl(\eins_{y_{1}}(x_{i}),..., \eins_{y_{n}}(x_{i}), \eins_{d(x_{i},x'_{i})>\epsilon},1\bigr), i=1,..,n+1$ amounts to
the linear independence of the set
\begin{eqnarray*}
  \Bigl(I_{n\times n}, &z_{n}&,I_{n}\Bigr)
\end{eqnarray*}
together with
\begin{eqnarray*}
\Bigl(0,.., 1_{i},..,0 , &\eins_{d(y_{i},x'_{n+1})>\epsilon}&,1\Bigr)
\end{eqnarray*}
where $z_{n}$ has components $\eins_{d(y_{i},x'_{i})>\epsilon}, i=1,..,n$, $I_{n\times n}$ is the identity matrix,
$I_{n}$ is the vector of $1$s,
and $1_{i}$  indicates a $1$ in the $i$-th position.
Because the first row has the identity matrix, this set of vectors is linearly independent if and only if
\begin{eqnarray*}
\Bigl(0,.., 1_{i},..,0 , &\eins_{d(y_{i},x'_{i})>\epsilon}&,1\Bigr)\\
\Bigl(0,.., 1_{i},..,0 , &\eins_{d(y_{i},x'_{n+1})>\epsilon}&,1\Bigr)
\end{eqnarray*}
is linearly independent, which
is equivalent to the assertion that  $x'\in \Lambda_{i}$ defined in \eqref{lambda1}. Consequently, we obtain
\begin{equation}
\label{theta1}
 \Theta_{i} =\Pi_{i} \cap \Lambda_{i} \, .
\end{equation}
For $\Theta_{i,j}$ with $i \leq j$,  let us first show that  $\Theta_{i,i}=\emptyset$.
 To that end, let $x'\in X^{n+2}$ and consider
$\nu \in \Pi_{i,i}(x')$. Then using the same reasoning as above,  it follows that the linear independence condition
is equivalent to the linear independence of the three vectors
\begin{eqnarray*}
  \Bigl(0,.., 1_{i},..,0 , &\eins_{d(y_{i},x'_{i})>\epsilon}&,1\Bigr)\\
\Bigl(0,.., 1_{i},..,0 , &\eins_{d(y_{i},x'_{n+1})>\epsilon}&,1\Bigr)\\
\Bigl(0,.., 1_{i},..,0 , &\eins_{d(y_{i},x'_{n+2})>\epsilon}&,1\Bigr)\, .
\end{eqnarray*}
Since the  last row is identically $1$, the independence of this set is not possible regardless of the values
of $\eins_{d(y_{i},x'_{i})>\epsilon},\eins_{d(y_{i},x'_{n+1})>\epsilon}$ and
$\eins_{d(y_{i},x'_{n+2})>\epsilon}$. Therefore,
\begin{equation}
\label{theta2a}
\Theta_{i,i}=\emptyset, \quad i=1,..,n\, .
\end{equation}
So let us consider $\Theta_{i,j}$ with $i < j$.  Then, upon relabeling, the linear independence
of the set
$\bigl(\eins_{y_{1}}(x_{i}),..., \eins_{y_{n}}(x_{i}), \eins_{d(x_{i},x'_{i})>\epsilon},1\bigr), i=1,..,n+2$ amounts to
the linear independence of the set
\begin{eqnarray*}
  \Bigl(I_{n\times n}, &z_{n}&,I_{n}\Bigr)
\end{eqnarray*}
together with
\begin{eqnarray*}
\Bigl(0,.., 1_{i},..,0,..,0 , &\eins_{d(y_{i},x'_{n+1})>\epsilon}&,1\Bigr)\\
\Bigl(0,..,0,.., 1_{j},..,0 , &\eins_{d(y_{j},x'_{n+2})>\epsilon}&,1\Bigr)\, .
\end{eqnarray*}
Because the first row has the identity matrix, this set of vectors is linearly independent if and only if
both
\begin{eqnarray*}
\Bigl(0,.., 1_{i},..,0 , &\eins_{d(y_{i},x'_{i})>\epsilon}&,1\Bigr)\\
\Bigl(0,.., 1_{i},..,0 , &\eins_{d(y_{i},x'_{n+1})>\epsilon}&,1\Bigr)
\end{eqnarray*}
and
\begin{eqnarray*}
\Bigl(0,.., 1_{j},..,0 , &\eins_{d(y_{j},x'_{j})>\epsilon}&,1\Bigr)\\
\Bigl(0,.., 1_{j},..,0 , &\eins_{d(y_{j},x'_{n+2})>\epsilon}&,1\Bigr)
\end{eqnarray*}
are linearly independent.
 Then, as in the
 $\Theta_{i}$ case above, the  linear independence of these two sets is equivalent to  requiring
that  $x'\in \Lambda_{i,j}$ defined in \eqref{lambda2}.
That is, we have
\begin{equation}
\label{theta2}
\Theta_{i,j}=\Pi_{i,j}\cap  \Lambda_{i,j}\,.
\end{equation}
Therefore, we have established that
\[\Theta= \Pi_{0}  \cup_{k=1}^{n}(\Pi_{i}\cap  \Lambda_{i}) \cup_{i< j}(\Pi_{i,j}\cap  \Lambda_{i,j}) \, , \]
and  the assertion then easily follows.

\section*{Acknowledgments}
The authors thank  the referees for a thorough and thoughtful review of the manuscript providing many substantial improvements in its presentation.
The authors gratefully acknowledge this work supported by the Air Force Office of Scientific Research and the DARPA EQUiPS Program under Award Number FA9550-12-1-0389 (Scientific Computation of Optimal Statistical Estimators) and number FA9550-16-1-0054 (Computational Information Games)
\bibliographystyle{plain}
\bibliography{./refs}

\def\cprime{$'$}
\begin{thebibliography}{10}

\bibitem{Alfsen}
E.~M. Alfsen.
\newblock {\em Compact Convex Sets and Boundary Integrals}, volume~71.
\newblock Springer Berlin-Heidelberg-New York, 1971.

\bibitem{AliprantisBorder:2006}
C.~D. Aliprantis and K.~C. Border.
\newblock {\em Infinite {D}imensional {A}nalysis: {A} {H}itchhiker's {G}uide}.
\newblock Springer, Berlin, third edition, 2006.

\bibitem{Ash:1972}
R.~B. Ash.
\newblock {\em Real {A}nalysis and {P}robability}.
\newblock Academic Press, New York, 1972.
\newblock Probability and Mathematical Statistics, No. 11.

\bibitem{Bourbaki2}
N.~Bourbaki and S.~K. Berberian.
\newblock {\em Integration II}.
\newblock Springer, 2004.

\bibitem{Choquet1969lectures}
G.~Choquet.
\newblock {\em Lectures on Analysis: Vol. III: Infinite Dimensional Measures
  and Problem Solutions}.
\newblock W. A. Benjamin, 1969.

\bibitem{DellacherieMeyer:1975}
C.~Dellacherie and P.-A. Meyer.
\newblock {\em Probabilit\'es et {P}otentiel}.
\newblock Hermann, Paris, 1975.
\newblock Chapitres I {\`a} IV, {\'E}dition enti{\`e}rement refondue,
  Publications de l'Institut de Math{\'e}matique de l'Universit{\'e} de
  Strasbourg, No. XV, Actualit{\'e}s Scientifiques et Industrielles, No. 1372.

\bibitem{Dubins}
L.~E. Dubins.
\newblock On extreme points of convex sets.
\newblock {\em Journal of Mathematical Analysis and Applications},
  5(2):237--244, 1962.

\bibitem{Dudley:2002}
R.~M. Dudley.
\newblock {\em Real {A}nalysis and {P}robability}, volume~74 of {\em Cambridge
  Studies in Advanced Mathematics}.
\newblock Cambridge University Press, Cambridge, 2002.
\newblock Revised reprint of the 1989 original.

\bibitem{Dupavcova2011}
J.~Dupa{\v{c}}ov{\'a}.
\newblock Uncertainties in minimax stochastic programs.
\newblock {\em Optimization}, 60(10-11):1235--1250, 2011.

\bibitem{esfahani2015data}
P.~M. Esfahani and D.~Kuhn.
\newblock Data-driven distributionally robust optimization using the
  {W}asserstein metric: performance guarantees and tractable reformulations.
\newblock {\em arXiv:1505.05116}, 2015.

\bibitem{Kechris:1995}
A.~S. Kechris.
\newblock {\em Classical Descriptive Set Theory}.
\newblock Graduate Texts in Mathematics. Springer-Verlag, New York, 1995.

\bibitem{OSSMO:2011}
H.~Owhadi, C.~Scovel, T.~J. Sullivan, M.~McKerns, and M.~Ortiz.
\newblock Optimal {U}ncertainty {Q}uantification.
\newblock {\em SIAM Review}, 55(2):271--345, 2013.

\bibitem{Rogosinski}
W.~W. Rogosinski.
\newblock Moments of non-negative mass.
\newblock {\em Proceedings of the Royal Society of London. Series A.
  Mathematical and Physical Sciences}, 245(1240):1--27, 1958.

\bibitem{Simon}
B.~Simon.
\newblock {\em Convexity: An Analytic Viewpoint}.
\newblock Cambridge Univ. Press, Cambridge, 2011.

\bibitem{Straszewicz}
S.~Straszewicz.
\newblock {\"U}ber exponierte punkte abgeschlossener punktmengen.
\newblock {\em Fundamenta Mathematicae}, 24(1):139--143, 1935.

\bibitem{vershik2006kantorovich}
A.~M. Vershik.
\newblock Kantorovich metric: {I}nitial history and little-known applications.
\newblock {\em Journal of Mathematical Sciences}, 133(4):1410--1417, 2006.

\bibitem{Willard}
S.~Willard.
\newblock {\em General Topology}.
\newblock Addison-Wesely Publishing Company, London, 1970.

\bibitem{winkler1978integral}
G.~Winkler.
\newblock On the integral representation in convex noncompact sets of tight
  measures.
\newblock {\em Mathematische Zeitschrift}, 158(1):71--77, 1978.

\bibitem{Winkler:1988}
G.~Winkler.
\newblock Extreme points of moment sets.
\newblock {\em Math. Oper. Res.}, 13(4):581--587, 1988.

\bibitem{Wozabal}
D.~Wozabal.
\newblock A framework for optimization under ambiguity.
\newblock {\em Annals of Operations Research}, 193(1):21--47, 2012.

\end{thebibliography}
\newpage
\newpage
          %If you have subsections use:
%\begin{align}\label{labelname}
%   &  \\
%   & 
%\end{align}
          %\bibitem[visible label]{<biblabelname>}.
\newpage
          %
          % For figures use

          %\begin{figure}

          %The use of .eps files is encouraged, in which case you should
          %un-comment the \uspackage{graphics} command above, and use the
          %command
          %\include{figure.eps}
          % to insert the figure file.

          %\end{figure}

          % BibTeX users please use

          % \bibliographystyle{}

          % \bibliography{}

          %

          % Non-BibTeX users please use

          %

          % and use \bibitem to create references.

          %

          % Format for Journal Reference. For example

          \end{document}